\documentclass[11pt,a4paper]{amsart}
\usepackage[T1]{fontenc}
\usepackage[utf8]{inputenc}
\usepackage{lmodern}
\usepackage{amsmath,amssymb,amsthm,mathtools}
\usepackage{enumitem}
\usepackage{microtype}
\usepackage[a4paper,textwidth=155mm,textheight=235mm,centering]{geometry}
\usepackage[hidelinks,unicode]{hyperref}
\hypersetup{
  pdftitle={Cone Avoidance and the No-Least-Join Theorem},
  pdfauthor={Patrizio Cintioli},
  pdfsubject={Turing degrees, cone avoidance, restricted difference filters},
  pdfkeywords={Turing degrees, computably enumerable degrees, cone avoidance,
  cupping, restricted difference filters, extensions of embeddings}
}
\newtheorem{theorem}{Theorem}[section]
\newtheorem{lemma}[theorem]{Lemma}
\newtheorem{proposition}[theorem]{Proposition}
\newtheorem{corollary}[theorem]{Corollary}
\theoremstyle{definition}
\newtheorem{definition}[theorem]{Definition}
\theoremstyle{remark}

\newcommand{\leT}{\leq_T}
\newcommand{\nleT}{\nleq_T}
\newcommand{\ltT}{<_T}
\newcommand{\eqT}{\equiv_T}
\newcommand{\restr}{\upharpoonright}
\newcommand{\pre}{\preccurlyeq}
\newcommand{\pr}{\prec}
\providecommand{\npreccurlyeq}{\not\preccurlyeq}
\newcommand{\suc}{\succcurlyeq}
\newcommand{\inc}{\perp}
\newcommand{\eps}{\langle\rangle}
\newcommand{\flip}[1]{\widehat{#1}}
\newcommand{\F}{\mathcal F}
\newcommand{\dom}{\operatorname{dom}}
\newcommand{\parr}{\operatorname{par}}
\newcommand{\cat}{\mathbin{{}^{\frown}}}
\newcommand{\DGLM}{\textup{DGLM}}
\newcommand{\zero}{\mathbf 0}
\newcommand{\xx}[3]{\xi_{#1,#2,#3}}
\newcommand{\xs}[3]{\xi^{*}_{#1,#2,#3}}
\setlist[enumerate]{label=\textup{(\roman*)},leftmargin=*,itemsep=3pt,topsep=5pt}
\title[Cone Avoidance and the No-Least-Join Theorem]
{Cone Avoidance and the No-Least-Join Theorem}
\author{Patrizio Cintioli}
\address{Mathematics Division, School of Science and Technology,
University of Camerino, 62032 Camerino, Italy}
\email{patrizio.cintioli@unicam.it}

\subjclass[2020]{Primary 03D28; Secondary 03D25}
\keywords{Turing degrees, computably enumerable degrees, cone avoidance,
cupping, restricted difference filters, extensions of embeddings}

\begin{document}

\begin{abstract}

We give a column-avoidance modification of the no-least-join construction of
Downey, Greenberg, Lewis, and Montalb\'an. We show that if $C$ is computably
enumerable, $A,B\ltT C$, and $A\nleT B$, then for every noncomputable set $E$
there is a set $X\leT C$ such that $B\leT A\oplus X$ and $E\nleT X$.
Consequently, for the corresponding Turing degrees $a,b,c$, the degrees
$x\leq c$ satisfying $b\leq a\vee x$ have no nonzero common lower bound;
equivalently, no nonzero Turing cone contains all of them.
This gives a negative answer to their Question~1.19.
\end{abstract}
\maketitle

\section{Introduction}

Downey, Greenberg, Lewis, and Montalb\'an~\cite{DGLM} study
extensions of embeddings of finite upper semilattices below computably
enumerable Turing degrees, as part of their analysis of the
two-quantifier theory of the $\Delta^0_2$ degrees with join. One of
the objects isolated in their work is the following upward-closed
family. For degrees $a,b\leq c$, let
\begin{equation}\label{eq:filter}
 \F^c_{a,b}=\{x\leq c:b\leq a\vee x\}.
\end{equation}
Following their terminology, we call this the \emph{restricted
difference filter}. The word ``filter'' refers only to upward closure
inside the initial segment below $c$.

The behavior of these filters already exhibits a sharp contrast.
DGLM's Theorem~1.11 gives c.e. degrees
\[
 \zero<a<b<c
\]
for which $\F^c_{a,b}$ is exactly the cone above $b$, restricted
below $c$. On the other hand, their no-least-join theorem,
Theorems~1.23 and~6.1 of~\cite{DGLM}, shows that if $c$ is c.e.,
$a,b<c$, $a\nleq b$, and $b>\zero$, then
\[
 \F^c_{a,b}\nsubseteq\{x:b\leq x\}.
\]
Thus, under these hypotheses, the degree $b$ itself is not information
that every witness to the cupping requirement must compute.

A natural strengthening is to ask whether \emph{any} fixed nonzero
degree can be unavoidable. Equivalently, can a restricted difference
filter $\F^c_{a,b}$ with $a\nleq b$ be contained in some nonzero
Turing cone?  This question also arises from a second phenomenon
discovered in~\cite{DGLM}. Their Theorem~1.18 constructs pairwise
incomparable c.e. degrees $a,b,d,e$ below a c.e. degree $c$ such that
\[
 \F^c_{a,b}\subseteq\F^c_{d,e}.
\]
Hence one cupping requirement can force another. DGLM ask whether
the information shared by all witnesses to the first requirement
might itself compute a fixed nonzero degree. They formulate the
question as follows.

\begin{quote}
\textbf{Question 1.19 \cite{DGLM}.}
Is there a c.e. set $C$, incomparable sets $A,B<_T C$ and a
noncomputable set $E$ such that for any $\Delta^0_2$ set $X$ which
is computable in $C$, if $X$ joins $A$ above $B$, then $X$ computes
$E$?
\end{quote}

In Turing-reducibility notation, ``$X$ joins $A$ above $B$'' means
$B\leT A\oplus X$, so Question~1.19 asks whether one can find a c.e.
set $C$, incomparable sets $A,B<_T C$, and a noncomputable set $E$
such that
\begin{equation}\label{eq:question}
 (\forall X\leT C)\,
 [B\leT A\oplus X\ \Longrightarrow\ E\leT X].
\end{equation}
The additional requirement that $X$ be $\Delta^0_2$ is automatic
from $X\leT C$, since $C$ is c.e. At the level of degrees,
Question~1.19 therefore asks exactly whether, in the incomparable
case, a restricted difference filter can have a nonzero common lower
bound.

We show that this cannot happen. In fact, no nonzero Turing cone
contains the whole restricted difference filter: for every prescribed
nonzero degree there is a witness to the cupping requirement which
does not compute it.

\begin{theorem}[Cone avoidance]\label{thm:main}
Let $C$ be c.e., let $A,B\ltT C$, and suppose that $A\nleT B$.
For every noncomputable set $E$ there is a set $X\leT C$ such that
\[
 B\leT A\oplus X\qquad\text{and}\qquad E\nleT X.
\]
\end{theorem}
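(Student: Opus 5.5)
We will prove Theorem~\ref{thm:main} by a full-approximation construction relative to a fixed computable enumeration $(C_s)_s$ of $C$, modelled on the DGLM no-least-join construction but with the ``avoid $b$'' requirements replaced by ``avoid $E$'' requirements. The set $X$ will be built so that $X\leT C$ by permitting: we enumerate $X$ in columns,
\[
X=\{\langle e,n\rangle : n \in X^{[e]}\},
\]
and we change $X\restr m$ at stage $s$ only when $C_s\restr m$ changes. Any $\Delta^0_2$ approximation to $X$ that obeys $C$-permission in this sense is computable from $C$.

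Cupping will be obtained by a global coding. We fix Turing functionals with $B=\Phi^{C}$ and, using $A\ltT C$, approximations $A_s$ and $B_s$ computed from $C_s$. We then code $B$ into $A\oplus X$ as follows. Whenever $B_s(n)$ changes, we place a coding marker $x_n$ into $X$, or extract it. These moves are permitted because $B\leT C$ and $C$ is c.e., so the $B$-change can be traced to a $C$-change below the use. To recover $B(n)$ from $A\oplus X$, we read off the final state of the $X$-markers, with $A$ used to certify which markers are final. Here we exploit the DGLM mechanism: a marker may be moved only when certified by $A$-changes, and $A\nleT B$ supplies infinitely many opportunities where $A$ changes but $B$ does not.

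The cone-avoidance requirements are, for each $e$,
\[
R_e:\ E\neq \Psi_e^{X}.
\]
The difficulty is that $E$ is an arbitrary noncomputable set, not necessarily $\Delta^0_2$, so we cannot diagonalize against $E$ directly. We therefore use the standard splitting-tree / minimal-pair style argument: $R_e$ tries to ensure that $\Psi_e^X$ is either partial or computable. Suppose that, in the restricted space of $C$-permitted extensions of the current approximation consistent with the coding requirements, we find two $e$-splitting extensions. Then at the next permitted moment we can switch $X$ to the branch that disagrees with $E$. If instead no such splitting is ever found, then $\Psi_e^X$, if total, is computable by searching for convergent computations along allowed extensions, and hence differs from $E$. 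Because we cannot know $E$, each $R_e$ is organised on a tree of strategies with guesses about whether a splitting exists. The genuine $E$-dependent choice between the two split branches is made noneffectively: the construction is $C$-computable given finitely much nonuniform information per requirement, which suffices because we only need the existence of $X$.

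The main obstacle is the interaction between the coding of $B$ and $R_e$. The coding forces changes to $X$ that may destroy $e$-splittings, and conversely preserving a splitting freezes markers needed for coding. Following DGLM, we would resolve this by letting $R_e$ restrain only markers $x_n$ for $n$ beyond a threshold. Coding below the threshold is then handled by moving markers under $A$-permission, which is where $A\nleT B$ is essential: if $R_e$ could be injured infinitely often without ever being able to use an $A$-change, then $B$ would compute $A$. Verifying that every requirement is injured only finitely often, and that the decoding of $B$ from $A\oplus X$ is correct, is the heart of the proof. We expect the $\Pi_2$ guessing about splitting existence, combined with permitting, to be the delicate step.
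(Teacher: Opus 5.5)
There is a genuine gap at exactly the point you call ``the difficulty''. For an arbitrary noncomputable $E$ you choose the branch at each splitting nonuniformly, and you claim that ``finitely much nonuniform information per requirement'' suffices. It does not. There are infinitely many $R_e$, each choice depends on a bit $E(n_e)$, and later requirements act on top of earlier choices. So your construction is computable from $C$ together with an infinite sequence of bits of $E$, and $X\leT C$ is lost.

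The missing observation is that this case never has to be handled. If $E\nleT B$, then $X=B$ already gives $X\leT C$, $B\leT A\oplus X$ and $E\nleT X$. A construction is needed only when $E\leT B$, and then $E\leT C$ is $\Delta^0_2$. The paper places $E$ in the odd column of $D=B\oplus E\eqT B$, so $A\nleT D$ still holds. It codes all of $D$ and diagonalizes only against $\pi(D)=E$, and it does so uniformly. A certificate supplies, for every $\gamma\in 2^k$ extending the certified node, a string $\zeta(\gamma)$ extending $\tau^{1-\gamma(2n+1)}$ for a splitting $(\tau^0,\tau^1,n)$. The true $D$ then selects the correct branch through the long root $\rho_q=D\restr k$ of the permanent certified child; a change of $D_s$ below $k$ merely cancels the child.

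The second gap is the interaction between coding and diagonalization, which you defer as ``the heart of the proof''. Your sketch (thresholds, markers moved under $A$-permission, ``opportunities where $A$ changes but $B$ does not'') is not the DGLM mechanism and is not shown to work. The obstruction is that moving $X$ onto a diagonalizing extension can conflict with an old coding axiom $(\sigma,\zeta)\mapsto\nu$ with $\flip\theta\pre\sigma$ and $\zeta$ below the current candidate. DGLM and the paper overcome it as follows:
\begin{enumerate}
\item They build \emph{two} outputs $X_0,X_1$ with $\Gamma_j(A,X_j)=D$.
\item They prove an available-side invariant: at every stage at least one side is safe.
\item They meet only the disjunctive requirements ``$\Phi_{r,0}^{X_0}\neq E$ or $\Phi_{r,1}^{X_1}\neq E$'', concluding that some $X_j$ avoids $E$ without knowing which.
\end{enumerate}
Your single-output requirements $E\neq\Psi_e^X$ would force every diagonalization onto one side, where no such guarantee exists.

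Likewise, $A\nleT D$ does not enter through ``infinitely many $A$-changes''. It enters through the fact that a permanent procedure has a longest truly correct node (otherwise $D$ computes $A$ by reading the labels $\alpha_p$), which bounds its actions.

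Finally, your ``no splitting implies computable'' step searches only over $C$-permitted extensions, and that space is not computably searchable. The paper avoids permitting on $X$ altogether by taking $X_j=\Xi_j(m)$ for the $C$-computable modulus $m$. Candidates are then arbitrary finite extensions, and splittings are sought among all of them.
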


\begin{corollary}\label{cor:filter}
Let $c$ be a c.e. degree, and let $a,b<c$ with $a\nleq b$.
Then $\zero$ is the only common lower bound of $\F^c_{a,b}$.
In particular, Question~1.19 of~\cite{DGLM} has a negative answer.
\end{corollary}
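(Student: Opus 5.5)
The statement to prove is Corollary~\ref{cor:filter}, and I would derive it directly from Theorem~\ref{thm:main}. The plan is to fix representatives: a c.e. set $C$ of degree $c$, and sets $A,B$ of degrees $a,b$. Then $a,b<c$ gives $A,B\ltT C$, and $a\nleq b$ gives $A\nleT B$, so the hypotheses of Theorem~\ref{thm:main} hold.

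First I check that $\zero$ is a common lower bound. This is trivial, since $\zero$ lies below every degree. The filter is also nonempty, because $c\in\F^c_{a,b}$: indeed $b\leq c\leq a\vee c$.

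Next, suppose $e$ is a common lower bound of $\F^c_{a,b}$ with $e\neq\zero$. Choose a set $E$ of degree $e$; it is noncomputable. Theorem~\ref{thm:main} gives $X\leT C$ with $B\leT A\oplus X$ and $E\nleT X$. Let $x=\deg(X)$. Then $x\leq c$ and $b\leq a\vee x$, so $x\in\F^c_{a,b}$, yet $e\nleq x$. This contradicts the assumption that $e$ lies below every member of the filter. Hence $\zero$ is the only common lower bound. Equivalently, no cone $\{x:e\leq x\}$ with $e>\zero$ contains $\F^c_{a,b}$.

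For the negative answer to Question~1.19, suppose $C,A,B,E$ witnessed~\eqref{eq:question} with $A,B$ incomparable and $E$ noncomputable. Incomparability gives $A\nleT B$. As noted in the introduction, every $X\leT C$ is automatically $\Delta^0_2$, so the question's restriction to $\Delta^0_2$ sets changes nothing. Theorem~\ref{thm:main} then produces an $X\leT C$ with $B\leT A\oplus X$ and $E\nleT X$, which contradicts~\eqref{eq:question}.

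There is no real obstacle here: all the difficulty is inside Theorem~\ref{thm:main}. The one point to check is that the question assumes only incomparability, which is stronger than $A\nleT B$. So the theorem applies to every instance the question allows, and in fact to more.
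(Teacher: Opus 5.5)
Your proposal is correct and matches the paper's own argument: apply Theorem~\ref{thm:main} to representatives to obtain, for each $e>\zero$, a member of $\F^c_{a,b}$ not above $e$, and observe that the incomparability assumed in Question~1.19 implies $A\nleT B$. Your extra remarks (that the filter is nonempty because $c\in\F^c_{a,b}$, and that every $X\leT C$ is automatically $\Delta^0_2$) are harmless additions.
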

\begin{proof}
For every $e>\zero$, Theorem~\ref{thm:main}, applied to representatives,
gives an $x\in\F^c_{a,b}$ with $e\nleq x$. Thus every common lower bound is $\zero$, and $\zero$ itself is a
lower bound.
The incomparability in Question~1.19 implies the hypothesis $a\nleq b$.
\end{proof}

Equivalently, whenever $a\nleq b$, the restricted difference filter
$\F^c_{a,b}$ is contained in no nonzero Turing cone.
Thus the no-least-join theorem, which excludes $b$ itself as
unavoidable information, extends to cone avoidance for every
prescribed nonzero degree.

This also clarifies the dependence phenomenon of
Theorem~1.18 of~\cite{DGLM}. Although membership in one restricted
difference filter may force membership in another, this implication
cannot, under the hypotheses of Question~1.19, be explained by a fixed
nonzero degree lying below every witness to the first cupping
requirement. In this precise sense, the dependence exhibited by
Theorem~1.18 is genuinely relational.

The order of quantifiers is important. The witness $x$ can depend on $e$.
We do not construct a fixed pair of members of $\F^c_{a,b}$ having no
nonzero common lower bound, and we do not assert a uniform choice of a
successful witness.

The proof is an adaptation of the two-output construction in
\cite[Section~6]{DGLM}. If $E\nleT B$, then $X=B$ already works.
In the remaining case, put $D=B\oplus E\eqT B$. We continue to code the
\emph{whole} of $D$ into each $A\oplus X_j$, but use only the odd column
of $D$ for diagonalization. Merely replacing $B$ by $E$ in the original
no-least-join theorem would not suffice: it would yield a reduction of
$E$, rather than the required reduction of $B$, from $A\oplus X$.

We therefore isolate the part of the construction that does not depend
on its diagonalization target. The structural transfer theorem below
retains the scheduling, coding, and safety mechanism of
\cite[Section~6]{DGLM}, while allowing any finite effective rule for
certificates that obeys the stated structural conditions. Its proof is
included, rather than treating the transfer as a consequence of the
statement of the original theorem.

Question~1.19 arises in DGLM's broader study of the
extension-of-embeddings problem. The present result does not settle
the sufficiency of the anti-cupping condition, nor does it decide the
two-quantifier theory of the $\Delta^0_2$ degrees with join. It does,
however, rule out one natural form that an additional obstruction
might take: a fixed nonzero computational core shared by all witnesses
to a single cupping requirement.

\section{Approximations, strings, and historical records}\label{sec:prelim}

This section fixes the approximation and bookkeeping conventions used
throughout the construction. In particular, we isolate a monotone
approximation to a common modulus, and we introduce the freshness and
frozen-budget mechanisms that will control historical axioms.

\paragraph{Conventions.}
All sets are subsets of $\omega$ and are identified with their
characteristic functions. We write $\leT$ and $\eqT$ for Turing
reducibility and Turing equivalence, $\oplus$ for the usual join, and
$\zero$ for the computable degree. When degree notation is in use,
lower-case letters such as $a,b,c,e$ denote Turing degrees. Standard
background and terminology in computability theory may be found
in~\cite{Soare16}.

Throughout the construction, $A$ and $D$ have computable approximations
$(A_s)$ and $(D_s)$. When $A,D\leT C$ for a c.e. set $C$, these
approximations may be chosen to have a common $C$-computable convergence
bound. Indeed, fix reductions of $A$ and $D$ to $C$, and approximate them
using a computable enumeration $(C_s)$. With oracle $C$, the finitely many
true computations on arguments at most $n$ and their uses can be found.
Waiting until $C_s$ agrees with $C$ below these uses, and until these
computations have run, gives a bound after which both approximations are
correct through $n$. Use a fixed default bit for any computation not yet
observed to converge.

For $s,n\in\omega$, define
\begin{equation}\label{eq:modulus}
 m_s(n)=\min\{t\leq s:(\forall u\in[t,s])\,
       (A_u,D_u)\restr(n+1)=(A_s,D_s)\restr(n+1)\}.
\end{equation}
Thus $m_s(n)$ records the beginning of the current uninterrupted interval
of stability, not the most recent stage belonging to that interval.

\begin{lemma}\label{lem:modulus}
The approximation $(m_s)$ is computable and converges pointwise to a
function $m$. Each coordinate $m_s(n)$ is nondecreasing in $s$.
Moreover,
\begin{equation}\label{eq:change}
 m_{s+1}\restr n\neq m_s\restr n
 \ \Longleftrightarrow\
 (A_{s+1},D_{s+1})\restr n\neq(A_s,D_s)\restr n,
\end{equation}
and, for every finite string $\mu$ of natural numbers,
\begin{equation}\label{eq:nonreturn}
 \mu\pre m_t,\quad \mu\npreccurlyeq m_s,\quad t<s
 \ \Longrightarrow\
 (\forall v\geq s)\ \mu\npreccurlyeq m_v.
\end{equation}
If the two set approximations have a common $C$-computable convergence
bound, then $m\leT C$.
\end{lemma}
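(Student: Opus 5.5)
The plan is to read off each clause of Lemma~\ref{lem:modulus} directly from the definition \eqref{eq:modulus}, using only that $(A_s,D_s)$ is a computable approximation converging pointwise. Writing $\sigma_s(n)=(A_s,D_s)\restr(n+1)$, the value $m_s(n)$ is the least $t\leq s$ such that $\sigma_u(n)=\sigma_s(n)$ for all $u\in[t,s]$. Computability of $(m_s)$ is immediate, since only finitely many stages are inspected. For monotonicity, compare $m_{s+1}(n)$ with $m_s(n)$. If $\sigma_{s+1}(n)=\sigma_s(n)$, the stability interval simply extends, so $m_{s+1}(n)=m_s(n)$. Otherwise $m_{s+1}(n)=s+1>m_s(n)$. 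Hence each coordinate is nondecreasing, and it changes at stage $s+1$ exactly when $\sigma(n)$ changes.

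For convergence, fix $n$. Since the approximations converge pointwise, there is a stage $s_0$ after which $\sigma_s(n)$ is constant, and by the dichotomy just proved $m_s(n)$ is constant from $s_0$ on. For \eqref{eq:change}, note that $m_{s+1}\restr n\neq m_s\restr n$ holds iff some $k<n$ has $\sigma_{s+1}(k)\neq\sigma_s(k)$. Because the $\sigma(k)$ are nested prefixes, this is the same as $\sigma_{s+1}(n-1)\neq\sigma_s(n-1)$, that is, $(A_{s+1},D_{s+1})\restr n\neq(A_s,D_s)\restr n$. The case $n=0$ is trivial, since both sides are false.

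For \eqref{eq:nonreturn}, suppose $\mu\pre m_t$, $\mu\npreccurlyeq m_s$ and $t<s$. Some coordinate $k<|\mu|$ satisfies $m_s(k)\neq\mu(k)=m_t(k)$. By monotonicity $m_s(k)>m_t(k)$, and then $m_v(k)\geq m_s(k)>\mu(k)$ for every $v\geq s$, so $\mu\npreccurlyeq m_v$. This is the key use of monotonicity, and it is really the only point needing care, since one must see that the witnessing coordinate can never come back down.

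Finally, suppose the approximations have a common $C$-computable convergence bound $g$, meaning $\sigma_s(n)=\sigma(n)$ for all $s\geq g(n)$. I compute $m(n)$ with oracle $C$ by computing $g(n)$ and outputting $m_{g(n)}(n)$. This is correct because $m_s(n)$ is constant for $s\geq g(n)$ by the change characterization. If ``bound'' is meant only as a stage from which the value is correct, rather than permanently stable, I would first replace $g(n)$ by the stability stage that the preliminary discussion actually produces. Waiting for $C_s$ to agree with $C$ below the relevant uses gives permanence, because $C$ is c.e. and the true computations are then never injured. The main (mild) obstacle is this bookkeeping around what the convergence bound guarantees. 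Everything else is a direct unwinding of the definition.
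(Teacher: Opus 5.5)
Your proof is correct and follows the same route as the paper. The reset-or-stay dichotomy for each coordinate gives monotonicity and convergence, and nesting of prefixes then gives \eqref{eq:change}; monotonicity gives \eqref{eq:nonreturn}; and $m(n)$ is read off as $m_{g(n)}(n)$ from the $C$-computable bound. Your remark that this bound is a permanent stability stage (because $C$ is c.e., agreement of $C_s$ with $C$ below the true uses is never lost) correctly fills in a point the paper leaves implicit.
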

\begin{proof}
A change in $(A_s,D_s)\restr(n+1)$ resets $m_s(n)$ to the current
stage; in the absence of a change, its value remains fixed. This proves
monotonicity and convergence. A change below $n$ changes at least the
coordinate $n-1$ of $m_s$, whereas no change below $n$ leaves every
coordinate below $n$ fixed. This proves~\eqref{eq:change}, with the case
$n=0$ immediate. Monotonicity proves~\eqref{eq:nonreturn}.
Finally, given a convergence bound for the first $n+1$ bits of both sets,
compute the finite history up to that bound. The start of its final
stable interval is $m(n)$.
\end{proof}

\paragraph{String notation.}
For binary strings, $\sigma\pre\tau$ means that $\sigma$ is an initial
segment of $\tau$, and $\sigma\pr\tau$ means proper initial segment.
The same notation applies to strings of natural numbers and infinite
sequences. We write $\sigma\inc\tau$ for incompatibility. For a nonempty
binary string $\theta$, $\flip\theta$ is obtained by changing its last
bit. All string coordinates start at $0$.

We enumerate axiom systems $\Xi_0,\Xi_1$ and $\Gamma_0,\Gamma_1$.
An axiom $\mu\mapsto\eta$ in $\Xi_j$ says that an oracle extending $\mu$
produces an output extending $\eta$. An axiom
$(\sigma,\eta)\mapsto\rho$ in $\Gamma_j$ has the analogous meaning for
two binary oracles. The systems $\Gamma_j$ will be consistent on all
oracles. The systems $\Xi_j$ need only be consistent on the approximating
oracles $m_s$ at the relevant stages and on their limit $m$.
Accordingly, before this consistency has been proved, an evaluation of an
axiom system means the collection of its applicable output strings.
Once these strings have been shown to be comparable, their union is the
evaluation. This convention does not presuppose off-path consistency
of $\Xi_j$.

Subscripts $s$ mean that axioms enumerated at stage $s$ are included.
For $j<2$ and $n<\omega$, write
\[
 \xx j s n=\Xi_{j,s}(m_s\restr n),\qquad
 \xs j s n=\Xi_{j,s-1}(m_s\restr n).
\]
We also write $\xi_{j,s}=\Xi_{j,s}(m_s)$ and
$\xi^*_{j,s}=\Xi_{j,s-1}(m_s)$.
Whenever the previous stage evaluations are strings, nonreturn gives
\begin{equation}\label{eq:oldoutputs}
 \xi^*_{j,s}\pre\xi_{j,s-1},\qquad
 \xs j s n\pre\xx j {s-1} n.
\end{equation}
Indeed, an old modulus string that applies at $s$ must still have applied at
$s-1$; once such a string ceases to apply, it cannot return.

\subsection{Fresh lengths and frozen budgets}\label{subsec:budgets}
We keep a computable increasing historical bound $H$. It exceeds the
stage number and all integers and string lengths recorded so far, and is
increased after each finite action. A newly requested \emph{large}
integer exceeds the current bound.
Every nonempty candidate or certificate string ends in $1$, and has
length greater than the bound preceding the action in which it is chosen.
The bound is then increased to cover all the data of that action.

A \emph{free extension} of a string $\eta$ is a string
$\eta\cat0^L1$, where $L$ exceeds the historical bound at the start of
its selection. In particular its length exceeds that of every earlier
candidate or certificate string.

\begin{lemma}[Free extensions]\label{lem:free}
If $\eta'$ is a free extension of $\eta$ and $\zeta$ is a candidate or
certificate string recorded at an earlier stage, then
$\zeta\pre\eta'$ implies $\zeta\pre\eta$.
\end{lemma}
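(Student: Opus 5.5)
The argument is a length comparison. Write $\eta'=\eta\cat0^L1$, where $L$ exceeds the historical bound $H$ current at the start of the selection of $\eta'$. Let $\zeta$ be a candidate or certificate string recorded at an earlier stage with $\zeta\pre\eta'$. If $\zeta$ is empty, then $\zeta\pre\eta$ trivially, so assume $\zeta$ is nonempty. By the conventions of Subsection~\ref{subsec:budgets}, every nonempty candidate or certificate string ends in $1$. After the action in which $\zeta$ was chosen, the bound was increased to cover all the data of that action, including the length $|\zeta|$. Since $H$ is increasing and $\zeta$ was recorded before the selection of $\eta'$, we get $|\zeta|\leq H<L$ at the start of that selection.

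Because $\zeta\pre\eta'$, the string $\zeta$ is the initial segment of $\eta'$ of length $|\zeta|$. There are three possible positions for its endpoint.
\begin{enumerate}
\item If $|\zeta|\leq|\eta|$, then $\zeta=\eta'\restr|\zeta|=\eta\restr|\zeta|$, so $\zeta\pre\eta$.
\item If $|\eta|<|\zeta|\leq|\eta|+L$, then the last bit of $\zeta$ is the bit of $\eta'$ at position $|\zeta|-1$. This position lies in the block $0^L$, so that bit is $0$. This contradicts the fact that $\zeta$ ends in $1$.
\item If $|\zeta|=|\eta|+L+1=|\eta'|$, then $|\zeta|>L$, contradicting $|\zeta|<L$.
\end{enumerate}
Hence only the first case can occur, and $\zeta\pre\eta$.

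The only point needing care is the claim $|\zeta|<L$. It relies on two things. First, the bound is updated after every action, so every earlier recorded length is covered. Second, "earlier stage" means the action that recorded $\zeta$ had been completed before $L$ was chosen. Both are explicit in the conventions of Subsection~\ref{subsec:budgets}, so no further obstacle arises. In fact, case (iii) is also excluded by the weaker bound $|\zeta|\leq L$, but the strict inequality is what the conventions give.
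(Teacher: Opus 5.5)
Your proof is correct and follows the paper's argument: $\zeta$ ends in $1$, and the freshness bound forces any $\zeta$ properly extending $\eta$ to end inside the block $0^L$, which is impossible. Your explicit three-way case split and your use of the bound $|\zeta|<L$, where the paper only needs $|\zeta|<|\eta'|$, are just a more detailed rendering of the same idea.
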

\begin{proof}
Otherwise $\eta\pr\zeta\pre\eta'$. The last bit of $\zeta$ is $1$,
but, by the length bound, its last position lies inside the block of
zeros immediately following $\eta$ in $\eta'$. This is impossible.
The empty string causes no exception.
\end{proof}

One length choice must \emph{not} be made fresh on every repeated call.
For each pair declared correct below, record a \emph{period budget} at
the beginning of each uninterrupted period during which it is correct.
The budget is greater than its node length and all data recorded before
that moment, and is kept fixed throughout that period. At the birth of
a new correct node, its budget is chosen after its defining data have
been recorded. A break in correctness starts a new budget when correctness
resumes. There are only finitely many relevant pairs at each stage, so
these records are effective and finite.

This is an explicit implementation of the frozen choice based on the
beginning of a correct period in \cite[Section~6.3, Step~3]{DGLM}.
Only two properties of these budgets are used: a budget chosen after
some historical record exceeds the size of that record; and, during an
uninterrupted correct period, repeated ordinary calls use the same
budget.

\section{The structural protocol}\label{sec:protocol}

We now describe the abstract two-output protocol. The scheduling,
coding, and safety mechanism is fixed here independently of the
eventual diagonalization target; the only part left open is the
certificate rule introduced below.

A procedure $p$ has a requirement label $r_p$, a modulus string $\mu_p$,
a root $\rho_p$ with $|\rho_p|=|\mu_p|$, two candidate strings
$\eta_{p,0},\eta_{p,1}$, and a partial computable map $\alpha_p$.
Its domain is a binary tree above $\rho_p$: every proper extension of
$\rho_p$ in the domain has its immediate predecessor in the domain.
We will maintain
\[
 |\alpha_p(\beta)|=|\beta|\qquad(\beta\in\dom\alpha_p)
\]
and preservation of the initial-segment relation. Put
$a_p=\alpha_p(\rho_p)$.

The active procedures form a finite chain, starting with $p_0$.
Only the weakest active procedure may call a new procedure. If $q$ is
called by $p$, a node $\beta\in\dom\alpha_p$ is recorded with the call;
we write $\parr(q)=(p,\beta)$ and call $\beta$ the \emph{father} of $q$.
Cancellation always cancels a procedure and all its descendants.
A cancelled procedure is never reactivated, and its historical data
and enumerated axioms are retained.

A pair $(p,\beta)$ is \emph{correct at stage $s$} if $p$ is active,
$\beta\in\dom\alpha_p$, $\beta\pre D_s$, and
$\alpha_p(\beta)\pre A_s$. The pair order is the transitive closure of
\[
 (p,\beta)<(p,\gamma)\quad(\beta\pr\gamma),
 \qquad
 \parr(q)<(q,\rho_q).
\]
We do not assume in the definition that the correct pairs are linearly
ordered; this will follow from the father property.

Every nonroot node $\delta$ added to $\dom\alpha_p$ is supplied with
a \emph{certificate}. It consists of an integer $k_p(\delta)>|\delta|$,
a side $j_p(\delta)<2$, and strings
\[
 \zeta_{p,\delta}(\gamma)
 \quad\text{for all }\gamma\in2^{k_p(\delta)}\text{ with }\delta\pre\gamma.
\]
The explicit subscript $\delta$ prevents any ambiguity between
certificates attached to different nodes.

\begin{definition}[Admissible certificate rule]\label{def:admissible}
Consider a surviving procedure $p$ with current child $q$, and let
$\beta$ be the longest correct node of $p$ before any action at stage $s$.
Put
\[
 \delta=D_s\restr(|\beta|+1),\qquad
 \theta=A_s\restr(|\beta|+1).
\]
When $\delta\notin\dom\alpha_p$, a \emph{structurally admissible
certificate rule} may, by a finite effective test, propose a side $j<2$,
a large $k>|\delta|$, and long strings $\zeta(\gamma)$, ending in $1$,
for all $\delta\pre\gamma\in2^k$, subject to
\begin{equation}\label{eq:safety}
 \nexists(\sigma,\zeta)\in\dom\Gamma_{j,s-1}\,
 [\,\flip\theta\pre\sigma\ \&\ \zeta\pre\eta_{q,j}\,]
\end{equation}
and
\begin{equation}\label{eq:certext}
 \eta_{q,j}\pr\zeta(\gamma)
 \qquad(\delta\pre\gamma\in2^k).
\end{equation}
The rule is allowed to return no proposal. It must terminate at each
stage. No diagonalization property is part of structural admissibility.
\end{definition}

Here and below, comparisons with $A_s,D_s$ are effective, because only
finitely many bits are involved. If the existence of a required longest
node or of a string evaluation has not yet been established, the
instructions are understood subject to the inductive verification in
Section~\ref{sec:transfer}. Safety itself is tested from a finite set
of old axioms: if neither side is safe, there is simply no proposal.

\subsection{Stage instructions}
At stage $0$ create $p_0$, with label $0$, all its strings empty, and
$\alpha_{p_0}(\eps)=\eps$. Enumerate the empty axioms into each
$\Xi_j$ and $\Gamma_j$, and record the first budget for its root.
For $s>0$, perform the following steps.

\paragraph{Step 1: cancellation.}
Cancel every active $p$ for which $\mu_p\npreccurlyeq m_s$, together
with its descendants. Update correctness records and period budgets
for the surviving pairs.

\paragraph{Step 2: certification.}
For every surviving procedure $p$ which has a child, form $\beta,\delta,
\theta$ as in Definition~\ref{def:admissible}. If $\delta$ is not already
in the domain and the rule proposes an admissible certificate, say that
$p$ \emph{requires attention}. If any procedure requires attention, let
$p$ be the strongest such procedure. Record its certificate, set
$\alpha_p(\delta)=\theta$, and record a period budget for this new
correct node. Cancel all procedures weaker than $p$ and end the stage.
No $\Xi$- or $\Gamma$-axioms are enumerated in this case.

\paragraph{Step 3: a new call.}
If no procedure requires attention, let $p$ be the weakest surviving
procedure. Let $\beta$ be its longest correct node, and put
$\delta=D_s\restr(|\beta|+1)$. If $\delta\notin\dom\alpha_p$, let
$k$ be the current frozen period budget of $(p,\beta)$. Otherwise let
$k=k_p(\delta)$. Call a new child $q$ with
\begin{equation}\label{eq:call}
 \begin{gathered}
 r_q=r_p+1,\qquad \parr(q)=(p,\beta),\qquad
 \mu_q=m_s\restr k,\\
 \rho_q=D_s\restr k,\qquad
 a_q=\alpha_q(\rho_q)=A_s\restr k.
 \end{gathered}
\end{equation}
In an \emph{ordinary call}, namely when $\delta$ is undefined, let
$\eta_{q,j}$ be a free extension of $\xs j s k$ for both $j<2$.
In a \emph{certified call}, let $j_*=j_p(\delta)$, let
$\eta_{q,j_*}$ be a free extension of
$\zeta_{p,\delta}(\rho_q)$, and let $\eta_{q,1-j_*}$ be a free
extension of $\xs {1-j_*} s {|\beta|}$.
For both $j<2$, enumerate
\begin{equation}\label{eq:axioms}
 \mu_q\mapsto\eta_{q,j}\in\Xi_j,
 \qquad
 (a_q,\eta_{q,j})\mapsto\rho_q\in\Gamma_j.
\end{equation}
Finally record the root's period budget and complete the historical
bound for the stage. Budgets for existing uninterrupted periods are
not changed.

\section{Structural transfer}\label{sec:transfer}

The purpose of this section is to verify that the preceding protocol
works for every structurally admissible certificate rule. The result
below separates the reusable structural part of the construction from
the particular diagonalization carried out in Section~\ref{sec:column}.

\begin{theorem}[Structural transfer]\label{thm:transfer}
For every structurally admissible finite effective certificate rule,
the protocol in Section~\ref{sec:protocol} has the following properties.
\begin{enumerate}
\item It is well defined at every stage. Each $\Gamma_j$ is consistent,
each $\xi_{j,s}$ is a finite string, and for every active $p$,
\begin{equation}\label{eq:activeidentity}
 \xx j s {|\rho_p|}=\eta_{p,j}\qquad(j<2).
\end{equation}
\item For every $s$ and $\ell>0$, some $j<2$ satisfies
\begin{equation}\label{eq:available}
 \nexists(\sigma,\zeta)\in\dom\Gamma_{j,s}\,
 [\,\flip{(A_s\restr\ell)}\pre\sigma\ \&\ \zeta\pre\xi_{j,s}\,].
\end{equation}
\item If $A\nleT D$, the permanent procedures form an infinite chain,
and the sets $X_j=\Xi_j(m)$ are total and satisfy
\[
 X_j\leT m\qquad\text{and}\qquad \Gamma_j(A,X_j)=D\quad(j<2).
\]
\end{enumerate}
\end{theorem}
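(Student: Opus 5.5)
We prove (i) and (ii) together by induction on stages and derive (iii) from them. The induction hypothesis at stage $s$ has four parts: the active procedures form a chain $p_0,p_1,\dots$ with $r_{p_{i+1}}=r_{p_i}+1$ and $\mu_{p_i}\pr\mu_{p_{i+1}}$; the father property, namely that the father $\beta$ of each active child $q$ of $p$ satisfies $\rho_p\pre\beta\pr\rho_q$ and $\alpha_p(\beta)\pr a_q$; the identity \eqref{eq:activeidentity}; and the statement that every $\Xi_j$-axiom applicable to $m_s$ belongs either to an active procedure or to a cancelled procedure whose output is an initial segment of the output of the active procedure called after it.

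\textbf{Step 1: consistency of $\Xi_j$ along $m_s$.} Cancellation in Step~1 removes exactly the procedures whose $\mu_p$ no longer lies below $m_s$. By \eqref{eq:nonreturn}, their axioms never apply again, so the applicable outputs are those of surviving procedures. In an ordinary call, $\eta_{q,j}$ freely extends $\xs j s k$. In a certified call, it extends $\zeta_{p,\delta}(\rho_q)$, which properly extends $\eta_{p',j}$ by \eqref{eq:certext}, where $p'$ was the child of $p$ at certification time. That child and everything weaker were cancelled at certification, and their outputs are comparable with $\xi_{j,s}$ through \eqref{eq:oldoutputs}. Lemma~\ref{lem:free} guarantees that no earlier recorded string becomes accidentally comparable with the new output except through its intended prefix. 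Together these give that $\xi_{j,s}$ is a string and that \eqref{eq:activeidentity} holds.

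\textbf{Step 2: consistency of $\Gamma_j$.} Take two axioms $(a_q,\eta_{q,j})\mapsto\rho_q$ and $(a_{q'},\eta_{q',j})\mapsto\rho_{q'}$ with compatible oracles. We show that $\rho_q$ and $\rho_{q'}$ are compatible by tracing the calls back to their common ancestor pair in the pair order. If the two branches split along an $A$-bit, the oracles are incompatible. If they split at a certified node, the safety condition \eqref{eq:safety} on the side $j$, combined with freshness, rules out an old $\Gamma_j$-axiom whose $\sigma$ extends $\flip\theta$ while its $\zeta$ lies below the new certificate strings. The remaining cases reduce to the inductive hypothesis.

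\textbf{Step 3: part (ii).} I argue by contradiction. Suppose both sides $j<2$ fail \eqref{eq:available} for a fixed $\ell$. Then there are old axioms on each side whose oracle conflicts with $A_s\restr\ell$ at bit $\ell-1$ and whose output lies below $\xi_{j,s}$. Both axioms come from procedures whose $\eta$ lies below $\xi_{j,s}$, which makes them comparable with current active strings. Only one side is extended by a certified call at any node, so the two offending axioms would have to originate at incompatible $A$-nodes, contradicting the chain structure. I expect this case analysis, together with Step 2, to be the main obstacle, because it is exactly where the flip-and-safety bookkeeping of \cite[Section~6]{DGLM} must be transferred without using any diagonalization property of the rule.

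\textbf{Step 4: part (iii).} Assume $A\nleT D$. Since $m$ converges, I show by induction on rank that each level of the chain eventually holds a permanent procedure. For the procedure $p$ at a given level, its longest correct node must grow: if it stopped growing, then $\alpha_p$ would give a $D$-computable description of $A$ on the correct nodes. Along the true path the frozen budgets stabilize, so $p$ eventually makes a call that is never cancelled, because $\mu_q=m\restr k$ and certification by stronger procedures happens only finitely often. Each permanent $q$ has $\eta_{q,j}$ of increasing length, so $X_j=\Xi_j(m)$ is total and $m$-computable. The axiom $(a_q,\eta_{q,j})\mapsto\rho_q$ is correct, since $a_q\pr A$, $\eta_{q,j}\pr X_j$ and $\rho_q\pr D$, and $|\rho_q|\to\infty$. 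Consistency from Step 2 therefore gives $\Gamma_j(A,X_j)=D$.
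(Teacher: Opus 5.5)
Your Step~1 rests on the claim that the outputs applicable at $m_s$ are those of surviving procedures, and this is false. A procedure $q$ cancelled by a stronger procedure's certification (Step~2 of the protocol) keeps $\mu_q\pre m_s$, possibly forever, so its axiom $\mu_q\mapsto\eta_{q,j}$ still applies. Suppose a later ordinary call $p$ had root length $k<|\rho_q|$. Then both axioms apply to $m_s$, and $\eta_{q,j}$ need not lie below $\xs{j}{s}{k}$; in that case Lemma~\ref{lem:free} makes the free extension $\eta_{p,j}$ incomparable with $\eta_{q,j}$.

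What is missing is the persistence lemma (Lemma~\ref{lem:persistence}): $\xs{j}{s}{|\rho_q|}=\eta_{q,j}$, and every new call has root length greater than $|\rho_q|$. Its proof needs the full father property (the father is the parent's longest correct node), not just the prefix relations in your hypothesis. It is also exactly where the budget rule enters: a new call descends from a node added at or after the certification that cancelled $q$'s branch, so its period budget was chosen after $|\rho_q|$ was recorded. For certified calls you additionally need two facts. First, nothing called after the registration stage still applies (Lemma~\ref{lem:history}(ii)). Second, $|\rho_q|$ is at most the father's length, so the unselected side, truncated at $|\beta|$, still extends $\eta_{q,j}$. Your appeal to \eqref{eq:oldoutputs} only links consecutive stages.

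Your Step~2 has a similar hole. Safety excludes only old axioms with $\flip\theta\pre\sigma$ and $\zeta\pre\eta_{v,j}$. Axioms with $\sigma\pr\flip\theta$ (hence compatible with $a_p$) and $\zeta$ below the certificate are not excluded. They are harmless only because consistency with the cancelled child's axiom $(a_v,\eta_{v,j})\mapsto\rho_v$ forces their outputs into $\beta$ (Lemma~\ref{lem:certaxioms}(iii)). Branches splitting on a $D$-bit under compatible labels likewise need an explicit comparison invariant such as \eqref{eq:Gammacompare}, which your induction hypothesis does not contain.

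Your Step~3 does not prove (ii). Certified calls at different levels of the active chain may select different sides, so both sides can carry inherited certificate strings. The chain structure alone does not prevent both from conflicting at the same $\ell$. In fact, right after a certified call on side $j_*$ that side \emph{does} fail at $\ell=|\beta|+1$. Since $A_s\restr(|\beta|+1)=\flip\theta$, the flipped prefix is $\theta$ itself, while the child $v$ cancelled at registration has $\theta\pre a_v$ and $\eta_{v,j_*}\pr\xi_{j_*,s}$.

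The invariant is maintained only inductively. Compare $s$ with a reference stage $t$: the previous stage, or the registration stage of the certificate used. Let $n$ be the first position where $A_t$ and $A_s$ differ. When $\ell\le n$, keep the side that worked at $t$, pushing offending axioms back to stage $t$ via Lemma~\ref{lem:history} and Lemma~\ref{lem:certaxioms}(ii). When $\ell>n$, take the unselected side and apply Corollary~\ref{cor:nodeeper}.

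Finally, your Step~4 uses $A\nleT D$ backwards. Unboundedly many truly correct nodes would give $A\leT D$, so the hypothesis yields a \emph{longest} truly correct node $\beta$ for each permanent $p$. Only then does $p$ eventually stop requiring attention, and the root length of its next call become fixed: it is the budget of $\beta$'s unbroken correct period, or $k_p(\delta)$ with $\delta=D\restr(|\beta|+1)$. A child called after $m$ settles below that length is then permanent. A longest correct node that kept growing would instead make $p$ certify, and cancel its child, infinitely often.
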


We prove the theorem in several steps. These steps isolate the
structural verification of \cite[Section~6]{DGLM}; none uses a property
of a diagonalization functional.

\subsection{Active records and applicable outputs}

We first prove well-definedness and~\eqref{eq:activeidentity} by a
simultaneous induction on the stage. In this induction we also prove
the elementary ancestry facts and the historical extension facts below.
At stage $s$, all assertions at earlier stages are available.
By~\eqref{eq:oldoutputs}, the old evaluations on $m_s$ are already
strings before Step~3 is performed. The verification of the ancestry
facts does not use the new candidate strings chosen in Step~3.

\begin{lemma}[Ancestry and correctness]\label{lem:ancestry}
At every stage the root procedure survives. If $\parr(q)=(p,\beta)$,
then
\[
 |\beta|<|\rho_q|,\quad
 \beta\pr\rho_q,\quad
 \alpha_p(\beta)\pr a_q,\quad \mu_p\pr\mu_q.
\]
The maps $\alpha_p$ preserve proper extension on their domains.
Every active root is correct. More generally, if a historical procedure
$q$ satisfies $\mu_q\pre m_s$, then $\rho_q\pre D_s$ and $a_q\pre A_s$.
While $q$ is active, its father is the longest correct node of its
parent. Consequently, the correct pairs form a linearly ordered initial
segment of the pair order on active procedures.
\end{lemma}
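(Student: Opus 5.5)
The plan is to argue by induction on the stage $s$, within the simultaneous induction announced above. Every assertion at stages $t<s$ is assumed. Each claim of Lemma~\ref{lem:ancestry} is then checked separately against Steps 1--3, since these are the only actions that create procedures, extend domains, or cancel.

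\textbf{Root survival.} Since $\mu_{p_0}$ is the empty string, $\mu_{p_0}\pre m_s$ always holds, so Step~1 never cancels $p_0$. Step~2 cancels only procedures strictly weaker than some $p$, and Step~3 cancels nothing.

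\textbf{Parent inequalities.} If $\parr(q)=(p,\beta)$, then $q$ was called at some stage $t$ with $\beta$ the longest correct node of $p$. The length $k$ is either the frozen period budget of $(p,\beta)$ or $k_p(\delta)$ with $|\delta|=|\beta|+1$. In both cases $k>|\beta|$: budgets exceed node length, and certificates satisfy $k_p(\delta)>|\delta|$. Correctness of $(p,\beta)$ at stage $t$ gives $\beta\pre D_t$ and $\alpha_p(\beta)\pre A_t$. By~\eqref{eq:call} we have $\rho_q=D_t\restr k$ and $a_q=A_t\restr k$, so $\beta\pr\rho_q$ and $\alpha_p(\beta)\pr a_q$, using $|\alpha_p(\beta)|=|\beta|$.

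For $\mu_p\pr\mu_q$: the procedure $p$ was active at $t$, hence survived Step~1, so $\mu_p\pre m_t$. Also $|\mu_p|=|\rho_p|\le|\beta|<k=|\mu_q|$, since $\beta\in\dom\alpha_p$ extends $\rho_p$. Since $\mu_q=m_t\restr k$, this gives $\mu_p\pr\mu_q$.

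\textbf{Preservation of extension.} New domain elements arise only in Step~2, as $\alpha_p(\delta)=\theta$ with $\delta=D_s\restr(|\beta|+1)$ and $\theta=A_s\restr(|\beta|+1)$, where $\beta$ is correct. Hence $\alpha_p(\beta)\pre A_s$ has length $|\beta|$ and $\alpha_p(\beta)\pr\theta$. Preservation of proper extension then follows by induction along the tree.

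\textbf{Historical roots.} Suppose $\mu_q\pre m_s$, where $\mu_q=m_t\restr k$ was set at stage $t$. Then $m_s\restr k=m_t\restr k$. Since each coordinate is nondecreasing and $m_t(k-1)\le t$, equality of the $(k-1)$st coordinate means the interval of stability for $(A,D)\restr k$ beginning at $m_t(k-1)$ persists through $s$. Hence $(A_s,D_s)\restr k=(A_t,D_t)\restr k$, so $\rho_q\pre D_s$ and $a_q\pre A_s$. In particular every active root is correct, because active procedures survived Step~1 and thus satisfy $\mu_q\pre m_s$.

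\textbf{Father property.} This is the main obstacle. Let $q$ be active with $\parr(q)=(p,\beta)$.

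First, $(p,\beta)$ is correct. From $\beta\pr\rho_q\pre D_s$ we get $\beta\pre D_s$. From $\alpha_p(\beta)\pr a_q\pre A_s$ we get $\alpha_p(\beta)\pre A_s$.

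Second, no longer node $\gamma\in\dom\alpha_p$ is correct. Suppose such a $\gamma\succ\beta$ were correct, and let $\delta$ be its extension of $\beta$ of length $|\beta|+1$. If $\delta$ was added after $q$ was called, then by Step~2 that addition cancelled all weaker procedures, including $q$, a contradiction. If $\delta$ existed at the time of the call, then $\delta\ne D_t\restr(|\beta|+1)$, since otherwise $\delta$ itself would have been correct then, contradicting the maximality of $\beta$. But $\delta\pre D_s$ and $\rho_q\pre D_s$ force $\delta\pre\rho_q=D_t\restr k$, which is a contradiction. Here I must be careful that correctness of $\gamma$ requires $\gamma\pre D_s$, so $\delta$ agrees with $D_s$, and $\rho_q$ pins $D_s$ below $k$.

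\textbf{Linear order.} Finally, the active procedures form a chain $p_0,p_1,\dots$, each called at its parent's longest correct node, and each root is correct. The correct nodes of each procedure are the initial segments of its longest correct node lying in its domain. It follows that the correct pairs are exactly the pairs below the weakest procedure's longest correct node in the pair order. This set is linearly ordered and forms an initial segment.
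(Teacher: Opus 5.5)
Your plan follows the paper's proof closely. The parent inequalities come from the budget and certificate lengths together with \eqref{eq:call}. Your historical-root argument uses the definition of $m_s(k-1)$ directly instead of nonreturn and \eqref{eq:change}, which is fine. The father property rests on the dichotomy between a successor added after the call and one already defined. However, the second branch of that dichotomy has a false step. You assert that if $\delta$ already existed when $q$ was called at stage $t$, then $\delta\neq D_t\restr(|\beta|+1)$, ``since otherwise $\delta$ itself would have been correct then''. Correctness of $(p,\delta)$ also requires $\alpha_p(\delta)\pre A_t$. So $\delta=D_t\restr(|\beta|+1)$ with $\alpha_p(\delta)\npreccurlyeq A_t$ is entirely possible. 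It is exactly the certified-call situation: Step~3 uses $k=k_p(\delta)$ precisely when $D_t\restr(|\beta|+1)$ lies in $\dom\alpha_p$ while $\beta$ is longest correct, i.e.\ when that successor has a wrong label. In this subcase your derivation $\delta\pre\rho_q=D_t\restr k$ contradicts nothing. The father property is therefore unproved for certified children, the case the construction depends on. Your closing caution tracks only the $D$-side.

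The repair uses the $A$-half of your own historical-root paragraph. Since $q$ survives Step~1 at stage $s$, you have both $D_s\restr k=D_t\restr k$ and $A_s\restr k=A_t\restr k$. As $|\delta|=|\alpha_p(\delta)|=|\beta|+1\leq k$, the node $\delta$ is correct at $s$ if and only if it was correct at $t$. It was not correct at $t$, by maximality of $\beta$. But a correct $\gamma$ with $\beta\pr\gamma$ at stage $s$ would make $\delta$ correct, since $\delta\pre\gamma\pre D_s$ and $\alpha_p(\delta)\pre\alpha_p(\gamma)\pre A_s$ by preservation of extension. This handles both subcases at once. It is exactly the paper's argument that an already defined successor can become correct only after the approximations change below $|\rho_q|$, in which case Step~1 cancels $q$. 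With this fix the rest of your proposal, including the linear-order conclusion, goes through as you describe.
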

\begin{proof}
The root's modulus string is empty. A new child has root length greater
than its father's length: in an ordinary call this follows from the
budget rule; in a certified call it follows from
$k_p(\delta)>|\delta|>|\beta|$. Its root and its father's node are
prefixes of the same $D_s$, and their labels are prefixes of the same
$A_s$. A new label on a one-bit successor is likewise chosen to extend
the predecessor's label. These facts give the required prefix relations
and, along ancestry, the proper extension of modulus strings.

A modulus string that still applies at stage $s$ has applied at every
stage since its creation, by nonreturn. Equation~\eqref{eq:change}
then shows that the corresponding $A$- and $D$-prefixes have not changed.
This proves the assertion about historical roots and labels, and in
particular about active roots. Correctness is downward closed in the
pair order because both nodes and labels extend along that order.

At its child's creation the father is longest correct by definition.
Suppose it first fails to be longest while the child survives. If a
new correct successor of the father is added, Step~2 cancels the child.
Otherwise an already defined successor has just become correct. The
approximations have then changed below its length, which is at most the
child's root length. Step~1 cancels the child, again a contradiction.
The same argument applies before an action at the next stage when the
child survives Step~1.

Within each individual tree, correct nodes are a downward-closed chain
of prefixes of $D_s$. For two different active procedures, the father
property says that every correct node of the ancestor is at or below
the father leading to the descendant. Iterating along the active chain
therefore compares the two pairs. This proves global linearity, as
well as the asserted initial-segment property.
\end{proof}

If an old procedure $p$ survives at stage $s$, the previous-stage
identity gives
\begin{equation}\label{eq:survivor}
 \xs j s {|\rho_p|}=\eta_{p,j}.
\end{equation}
Indeed $m_s\restr|\rho_p|=m_{s-1}\restr|\rho_p|=\mu_p$.
It follows that a new ordinary child extends its parent's candidate
strings. The same holds in a certified call: the certificate extends
the candidate of the child that was active at its registration, and
that old candidate extended the parent's candidate by induction.
Thus, at the current stage,
\begin{equation}\label{eq:activegrowth}
 p<q\text{ active}\quad\Longrightarrow\quad
 \eta_{p,j}\pr\eta_{q,j}\quad(j<2).
\end{equation}

The remaining issue for $\Xi_j$ is the contribution of cancelled
procedures whose modulus strings still apply.

\begin{lemma}[Persistence of an applicable historical modulus string]\label{lem:persistence}
Suppose that $q$ was active at a stage $t<s$, is cancelled by the end
of stage $s$, and $\mu_q\pre m_s$. Put $N=|\rho_q|$.
Then
\begin{equation}\label{eq:persistence}
 \xs j s N=\Xi_{j,t}(\mu_q)=\eta_{q,j}\quad(j<2).
\end{equation}
If a new procedure $p$ is called at stage $s$, then
\begin{equation}\label{eq:historicalgrowth}
 |\rho_p|>N\quad\text{and}\quad\eta_{q,j}\pr\eta_{p,j}\quad(j<2).
\end{equation}
\end{lemma}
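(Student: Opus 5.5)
The plan is to carry out the argument inside the simultaneous induction on stages, so that at stage $s$ all earlier instances of Lemma~\ref{lem:persistence}, Lemma~\ref{lem:ancestry}, \eqref{eq:activeidentity} and \eqref{eq:oldoutputs} may be used. First I would show that $\mu_q\pre m_u$ for every $u\in[t,s]$. The string $\mu_q$ applied at its creation stage $t_0\le t$ and applies at $s$, so the nonreturn property \eqref{eq:nonreturn} gives this. Next, $\Xi_{j,t}(\mu_q)=\eta_{q,j}$ is the previous-stage identity \eqref{eq:activeidentity} at stage $t$, because $q$ was active there and $m_t\restr N=\mu_q$.

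For the first equality in \eqref{eq:persistence}, I would argue that nothing enumerated after the creation of $q$ alters the evaluation of $\Xi_j$ on $\mu_q=m_s\restr N$. The only axioms $\mu\mapsto\eta$ with $\mu\pre\mu_q$ are those of $q$ and of its ancestors, and these give outputs that are prefixes of $\eta_{q,j}$ by \eqref{eq:activegrowth} at the stage of creation. Any later axiom has modulus string $\mu_p=m_u\restr k$ with $k$ a fresh budget or certificate length. The budget rule makes $k>N$ whenever the father is at or above $q$'s level, so such an axiom does not apply to the shorter string $\mu_q$. Otherwise $\mu_p$ is incomparable with $\mu_q$, since $m_u$ extends $\mu_q$ on the whole interval. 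Hence $\xs j s N$, which collects the applicable outputs from $\Xi_{j,s-1}$, is exactly the set of prefixes of $\eta_{q,j}$, and its union is $\eta_{q,j}$.

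For \eqref{eq:historicalgrowth}, let $p$ be the procedure called at stage $s$ with parent $p'$ and father $\beta$. The key step is to compare $q$ with the active chain at stage $s$. Since $\mu_q\pre m_s$, Lemma~\ref{lem:ancestry} gives $\rho_q\pre D_s$ and $a_q\pre A_s$. I would then trace the ancestry of $q$ to the last ancestor $p''$ that is still active, and show that the father of the cancelled branch is correct at $s$. By the father property and linearity of correct pairs, this father lies at or below $(p',\beta)$, and when $q$ hangs off $p'$ itself it lies at or below $\beta$ with $|\rho_q|\le|\beta|+1$. The new root has length $k$, which is either the frozen budget of $(p',\beta)$ or $k_{p'}(\delta)$. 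In the first case, the budget was chosen at the start of the current correct period, and I must show this is after $q$'s data were recorded, or that $q$ sits inside $\beta$'s subtree with smaller root. In the certified case $k>|\delta|$ is large by admissibility.

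The string inclusion then follows from \eqref{eq:survivor} and the choice of $\eta_{p,j}$ as a free extension of $\xs j s k$ or of $\zeta_{p',\delta}(\rho_p)$. In the ordinary case, $m_s\restr k\succcurlyeq\mu_q$, so $\xs j s k\succcurlyeq\xs j s N=\eta_{q,j}$ by the first part. In the certified case one uses \eqref{eq:certext}, and on the side $1-j_*$ the fact that $\xs{1-j_*}s{|\beta|}$ already extends $\eta_{q,1-j_*}$ when $N\le|\beta|$. The properness of the inclusion comes from the free extension.

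\textbf{Main obstacle.} I expect the hardest point to be the budget-timing argument: showing that $k>N$ when a frozen budget from an uninterrupted correct period is reused after $q$ was born and cancelled within that same period. Here I would use the fact that $q$'s cancellation without an approximation change can only come from Step~2 certifying a node above its father, which would change the longest correct node. I also need to handle the case $N>|\beta|+1$ for the side $1-j_*$ in a certified call.
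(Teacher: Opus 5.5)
Your architecture matches the paper's: take the weakest ancestor $r$ of $q$ still active at $s$; note that the child $v$ of $r$ on $q$'s branch can only be cancelled by a Step~2 action of $r$ at some $u\in(t,s]$, which adds the successor $\lambda^+$ of $v$'s father $\lambda$; then date the budgets after that action. The genuine gap is the certified call made by $r$ itself, which you leave open, and that is where the substance lies. Its non-selected side is a free extension of $\xs{1-j_*}{s}{|\beta|}$, so you need $|\beta|\ge N$.

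Your bound $|\rho_q|\le|\beta|+1$ is unjustified. For an ordinary call it is false, since $\beta$ can be the one-bit node $\lambda^+$ while $N$ is a large old budget. In the certified case it would be too weak anyway, and you then list $N>|\beta|+1$ as unresolved. The missing idea is \eqref{eq:fatherN}:
\begin{itemize}
\item $\lambda^+$ stays correct through $s$, and it entered $\dom\alpha_r$ at $u>t$. Since $\beta$ extends $\lambda^+$, the node $\beta^+=D_s\restr(|\beta|+1)$ must have been certified at some $w\in(u,s)$.
\item $\beta^+$ was correct at $w$ and is incorrect at $s$, because $\beta$ is longest correct.
\item On the other hand, $(A_x,D_x)\restr N$ is constant for $x\in[t,s]$, because $\mu_q$ applies throughout.
\end{itemize}
Hence $|\beta^+|>N$, that is, $|\beta|\ge N$, and the troublesome case cannot occur.

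Three further points need repair.
\begin{enumerate}
\item On the selected side, \eqref{eq:certext} gives only $\eta_{o,j_*}\pr\zeta$ for the child $o$ present at $w$. You still need $\eta_{q,j_*}\pre\eta_{o,j_*}$. The paper gets this from $|\rho_o|>|\beta|\ge N$, persistence at stage $w$, and the active identity at $w-1$. Alternatively, $o$ was called after $u$, so the inductive growth assertion at its creation stage applies.
\item Saying that $k$ is large by admissibility only compares $k$ with the historical bound at $w$. It exceeds $N$ because $w>u>t$, which is the same dating argument.
\item In your proof of the first equality, the alternative that $\mu_p$ is incomparable with $\mu_q$ never occurs. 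Every $m_u$ from the creation of $q$ through $s$ extends $\mu_q$, so every modulus string enumerated in that interval is comparable with $\mu_q$. You must therefore show that every call at a stage in $(t,s)$ has root length $>N$. This is the length half of \eqref{eq:historicalgrowth} at intermediate stages: calls are below $q$ while $q$ is active, and the bound follows by induction afterwards, exactly as the paper argues.
\end{enumerate}
Also, axioms applying to $\mu_q$ need not come only from $q$ and its ancestors, since cancelled historical procedures may apply. Starting from $\Xi_{j,t}(\mu_q)=\eta_{q,j}$ makes this harmless.
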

\begin{proof}
All the assertions are included in the stage induction. Let $r$ be the
weakest procedure active at both $t$ and $s$. It exists because the
root survives, and it is a proper ancestor of $q$. Let $v$ be the
child of $r$ active at $t$, and let $\beta$ be its father. The procedure
$v$ is cancelled at some stage $u\in(t,s]$. Since
$\mu_v\pre\mu_q\pre m_s$, the modulus string $\mu_v$ remains applicable
throughout this interval.

The procedure $r$ also survives throughout. Thus $v$ is
cancelled by an action of $r$ at Step~2 of stage $u$, not by Step~1
or by an ancestor of $r$.

The father property identifies the new node as
$\beta^+=D_u\restr(|\beta|+1)$. It is correct at $u$ and remains
correct through $s$: its length is at most $N$, and the approximations
below $N$ have not changed since $t$. Any child of $r$ active at $s$
therefore has father extending $\beta^+$. Such a father was added to
$\dom\alpha_r$ at or after stage $u$.

Suppose a procedure $p$ is called at stage $s$. If its parent is $r$,
its father $\delta$ extends $\beta^+$. In an ordinary call, the budget
of its present correct period was chosen no earlier than the addition
of $\delta$, so it exceeds the already recorded length $N$.
In a certified call, the certificate of the successor of $\delta$
was registered still later, so its large length also exceeds $N$.
If $p$ is further below $r$, the current child of $r$ was created after
$u$ and before $s$. By the stage induction its root length exceeds $N$;
so does the root length of $p$. This proves the length assertion
in~\eqref{eq:historicalgrowth}.

For every intervening stage $w\in(t,s)$, if $q$ is still active then a
new call at $w$ is below $q$ and has a modulus string of length greater
than $N$. If $q$ is already cancelled, the same statement follows from
the length assertion at stage $w$ by induction, since $\mu_q\pre m_w$.
Consequently no axiom $\mu\mapsto\eta$ with $|\mu|\leq N$ is enumerated
between stages $t$ and $s-1$. Moreover,
$m_t\restr N=m_s\restr N=\mu_q$. Hence
\[
 \xs j s N=\Xi_{j,t}(\mu_q).
\]
At stage $t$, the active-root identity identifies this last string with
$\eta_{q,j}$, proving~\eqref{eq:persistence}. This is an equality of
historical evaluations, not merely a comparison of candidate strings.

It remains to verify the candidate extension when $p$ is called at $s$.
If $p$ is below a child of $r$ created before $s$, use the extension
assertion at that earlier creation stage and~\eqref{eq:activegrowth}.
We may therefore assume that the parent of $p$ is $r$, with father
$\delta$ added no earlier than $u$.

For an ordinary call, the base is $\xs j s {|\rho_p|}$, which extends
$\xs j s N=\eta_{q,j}$. For a certified call, put
$\delta^+=D_s\restr(|\delta|+1)$ and let $w<s$ be the stage at which
$\delta^+$ was certified. Since $\delta$ was added at or after $u$,
we have $u<w<s$. The node $\delta^+$ was correct at $w$ but is not
correct at $s$, while all prefixes of length $N$ remained fixed.
Therefore $|\delta^+|>N$, so
\begin{equation}\label{eq:fatherN}
 |\delta|\geq N.
\end{equation}
The side using a free extension of $\xs j s {|\delta|}$ thus extends
$\eta_{q,j}$ by~\eqref{eq:persistence}.
For the certificate side, let $o$ be the child of $r$ at stage $w$
before certification. It survives Step~1, and
$|\rho_o|>|\delta|\geq N$. By the historical equality at stage $w$
and the active-root identity at stage $w-1$,
\[
 \eta_{q,j}=\xs j w N
 \pre\xs j w {|\rho_o|}=\eta_{o,j}.
\]
Every certificate string on that side extends $\eta_{o,j}$.
The free extension used for $p$ therefore properly extends
$\eta_{q,j}$ as well. This completes the simultaneous proof.
\end{proof}

\begin{proposition}\label{prop:Xi}
At every stage, $\xi_{j,s}$ is a finite string, and
\eqref{eq:activeidentity} holds for every active procedure.
\end{proposition}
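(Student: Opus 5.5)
We prove both assertions together by induction on the stage $s$, as part of the simultaneous induction announced at the start of this subsection. At stage $0$ the only axioms are the empty ones, so $\xi_{j,0}$ is the empty string and the identity holds for $p_0$. For $s>0$ we assume both statements at all earlier stages, together with Lemmas~\ref{lem:ancestry} and~\ref{lem:persistence} up to stage $s-1$. The argument then has two parts: first we treat the old axioms, that is, the evaluation $\xi^*_{j,s}=\Xi_{j,s-1}(m_s)$; then we add the axioms enumerated at stage $s$, which only happens in Step~3.

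For the old axioms, the induction hypothesis says that $\xi_{j,s-1}$ is a string. So \eqref{eq:oldoutputs} shows that every applicable old output is a prefix of $\xi_{j,s-1}$. Hence the applicable outputs are pairwise comparable and $\xi^*_{j,s}$ is a finite string. For each active $p$ surviving Step~1, \eqref{eq:survivor} gives $\xs j s {|\rho_p|}=\eta_{p,j}$. If Step~2 acts, no axioms are enumerated, so $\Xi_{j,s}=\Xi_{j,s-1}$ and we are done: the surviving active procedures are exactly those not cancelled, and each satisfies \eqref{eq:activeidentity}.

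If Step~3 acts, a new child $q$ is called with $\mu_q=m_s\restr k$, and the axiom $\mu_q\mapsto\eta_{q,j}$ is added. We must check two things. First, $\eta_{q,j}$ is comparable with every old output applicable on $m_s$; in fact we will show it extends every old output applicable on $m_s\restr|\rho_q|$. Second, no old output applicable on $m_s$ but not on $m_s\restr|\rho_q|$ exists, except through longer modulus strings whose outputs are compatible. We split the old applicable axioms $\mu\mapsto\eta$, with $\mu\pre m_s$, into two kinds.

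The first kind comes from active procedures $p$. Their outputs satisfy $\eta_{p,j}\pr\eta_{q,j}$ by \eqref{eq:activegrowth}, which was established for the new child in the discussion after \eqref{eq:survivor}.

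The second kind comes from cancelled procedures $q'$ with $\mu_{q'}\pre m_s$. Lemma~\ref{lem:persistence} gives $|\rho_q|>|\rho_{q'}|$ and $\eta_{q',j}\pr\eta_{q,j}$ by \eqref{eq:historicalgrowth}. So every applicable old axiom has modulus length below $|\rho_q|$ and output below $\eta_{q,j}$. The empty root axiom and the axiom of $p_0$ are covered by the first kind. Consequently $\xi_{j,s}=\eta_{q,j}$ if $q$ is the only new axiom. Also $\xx j s {|\rho_p|}$ equals $\eta_{p,j}$ for each older active $p$, because the new axiom's modulus string has length greater than $|\rho_p|$. The new $q$ satisfies $\xx j s {|\rho_q|}=\eta_{q,j}$ directly.

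The main obstacle is precisely this second kind: cancelled procedures whose modulus strings still apply. These are the only source of potential inconsistency in $\Xi_j$. The point is that Lemma~\ref{lem:persistence} was proved as part of the same induction at stage $s$, and crucially without using the new candidate strings. So the plan is to invoke it at stage $s$ before forming $\xi_{j,s}$, and to take care that the order of the inductive claims (ancestry, then persistence, then this proposition) is not circular.
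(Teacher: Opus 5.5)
Your proposal is correct and follows essentially the same route as the paper. You split the old applicable $\Xi_j$-axioms into those of surviving ancestors (handled by \eqref{eq:activegrowth}) and those of cancelled procedures (handled by Lemma~\ref{lem:persistence}), conclude that the new evaluation is $\eta_{q,j}$, and use \eqref{eq:oldoutputs} and \eqref{eq:survivor} for the no-call case and for older active procedures; your remark that Lemma~\ref{lem:persistence} is applied at stage $s$ within the same simultaneous induction, without using the new candidate strings, matches the paper's ordering of the inductive claims.
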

\begin{proof}
If a new procedure $p$ is called, every old modulus string applying
to $m_s$ comes from either a surviving ancestor or a cancelled historical
procedure. Its associated candidate string is properly extended by
$\eta_{p,j}$, respectively by~\eqref{eq:activegrowth} or
Lemma~\ref{lem:persistence}.

Thus the new evaluation is the string $\eta_{p,j}$.
If there is no call, no new axiom is enumerated, and consistency follows
from~\eqref{eq:oldoutputs}.

For an old surviving $q$, a new call has root length greater than
$|\rho_q|$, so it does not change the evaluation on the finite oracle prefix
$m_s\restr|\rho_q|$. Equation~\eqref{eq:survivor} proves the required
identity. For a newly called $p$, its own axiom applies and all old strings
supplied by applicable axioms are shorter prefixes of $\eta_{p,j}$, so the
identity also holds. This completes the stage induction, and therefore
justifies every finite evaluation used by the protocol.
\end{proof}

\subsection{Consistency of the coding functionals}

We now use a second stage induction. Its assertions are the consistency
of $\Gamma_{j,s}$, the comparison property in
Lemma~\ref{lem:Gamma}, and the certificate property in
Lemma~\ref{lem:certaxioms}. The construction is already well defined
by Proposition~\ref{prop:Xi}. At a certification stage we prove the
certificate property from the previous consistency assertion; at a call
stage we use already registered certificates to prove the comparison
property. This order avoids a dependency on future consistency.

The next lemma records exactly what the safety test at a certification
stage guarantees about previously enumerated coding axioms. It is the
local fact used in the certified-call case of the consistency proof
for the $\Gamma_j$.

\begin{lemma}[Old axioms at a certificate]\label{lem:certaxioms}
Suppose that $r$ certifies $\delta=\beta\cat i$ at stage $t$, with
$\alpha_r(\delta)=\theta$ and selected side $j$. Let $v$ be the child
of $r$ just before that action. For each certificate string
$z=\zeta_{r,\delta}(\gamma)$, the following statements hold.
\begin{enumerate}
\item $\parr(v)=(r,\beta)$ and $\theta\pre a_v$.
\item If $(\sigma,\zeta)\mapsto\nu$ is in $\Gamma_{j,t-1}$,
$\zeta\pre z$, and $\theta\npreccurlyeq\sigma$, then
$\zeta\pre\eta_{v,j}$.
\item If $(\sigma,\zeta)\mapsto\nu$ is in $\Gamma_{j,t-1}$,
$\zeta\pre z$, and $\sigma$ is comparable with $\flip\theta$, then
$\nu\pre\beta$.
\end{enumerate}
Furthermore, any coding axiom at a later stage whose second component
is a prefix of $z$ was already present at stage $t-1$.
\end{lemma}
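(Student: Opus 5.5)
The plan is to work directly from the certification instructions in Step~2 and the admissibility conditions \eqref{eq:safety} and \eqref{eq:certext}, together with the ancestry facts of Lemma~\ref{lem:ancestry}. Throughout, the previous consistency assertion of the second stage induction is available at stage $t-1$.

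For (i), I will use the definition of $\beta$ in Definition~\ref{def:admissible}. At stage $t$, before any action, $\beta$ is the longest correct node of $r$. Because $v$ survives Step~1, the father property of Lemma~\ref{lem:ancestry} identifies the father of $v$ with this longest correct node, so $\parr(v)=(r,\beta)$. Next, $\mu_v\pre m_t$, so Lemma~\ref{lem:ancestry} gives $\rho_v\pre D_t$ and $a_v\pre A_t$. Also $|a_v|=|\rho_v|>|\beta|$, and hence $|a_v|\geq|\beta|+1$. Since $\theta=A_t\restr(|\beta|+1)$, we conclude $\theta\pre a_v$.

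For (ii) and (iii), fix an axiom $(\sigma,\zeta)\mapsto\nu$ in $\Gamma_{j,t-1}$ with $\zeta\pre z$. By \eqref{eq:certext}, $\eta_{v,j}\pr z$, so $\zeta$ and $\eta_{v,j}$ are comparable. I will split into two cases according to whether $\zeta\pre\eta_{v,j}$ or $\eta_{v,j}\pr\zeta$.

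\emph{Case $\zeta\pre\eta_{v,j}$.} For (iii), the safety condition \eqref{eq:safety} rules out $\flip\theta\pre\sigma$. So if $\sigma$ is comparable with $\flip\theta$, we must have $\sigma\pr\flip\theta$, which gives $|\sigma|\leq|\beta|$ and $\sigma\pre\beta$-side labels. I then want $\nu\pre\beta$. To get it, I plan to compare with the axiom $(a_v,\eta_{v,j})\mapsto\rho_v$ of $v$. Here $\sigma\pre\theta\pre a_v$ (the prefix $\sigma\restr|\beta|$ of $\flip\theta$ equals $\theta\restr|\beta|$) and $\zeta\pre\eta_{v,j}$. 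Consistency of $\Gamma_{j,t-1}$ then gives that $\nu$ is comparable with $\rho_v$. The remaining point is that $|\nu|\leq|\sigma|\leq|\beta|$. I expect to extract this from the shape of the coding axioms: each axiom has $|\rho_q|=|a_q|$, and these lengths are preserved. Since $\beta\pr\rho_v$, this yields $\nu\pre\beta$.

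\emph{Case $\eta_{v,j}\pr\zeta$.} This is the main obstacle. The strings lying strictly between $\eta_{v,j}$ and $z$ could only come from axioms of descendants of $v$ created after $v$, or of cancelled procedures. I plan to handle this case with freshness. The certificate strings $z$ are long strings chosen at stage $t$, and they exceed the historical bound. Every $\zeta$ in $\Gamma_{j,t-1}$ was recorded earlier. By the free-extension argument of Lemma~\ref{lem:free}, made precise by choosing $z$ as a free extension of $\eta_{v,j}$ (or at least using that any earlier recorded string ending in $1$ that is a prefix of $z$ must be a prefix of $\eta_{v,j}$), we get $\zeta\pre\eta_{v,j}$. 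If the rule only guarantees length and a terminal $1$ rather than a literal free extension, I will instead argue that any such $\zeta$ is the candidate of a procedure below $v$, with a label extending $a_v\suc\theta$. That gives (ii) directly. For (iii), such a $\sigma$ extends $\theta$ and so is incomparable with $\flip\theta$, which makes (iii) vacuous. This alternative argument covers (ii) in either case, because $\theta\npreccurlyeq\sigma$ is precisely what excludes the descendants-of-$v$ situation.

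For the final assertion, I will use that the certificate strings are long. Any later coding axiom has second component $\eta_{q,j}$, which is a free extension chosen after stage $t$. By Lemma~\ref{lem:free}, if $\eta_{q,j}\pre z$, then $\eta_{q,j}$ is a prefix of an earlier string, which is impossible by the length bound. The exception is the case $\eta_{q,j}=z$, and this cannot occur, since free extensions have length exceeding every recorded string, including $z$. Hence every such axiom was already present at stage $t-1$.
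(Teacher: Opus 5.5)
Your overall route is the paper's. You get (i) from the father property and survival of Step~1. You get (iii) by comparing with $v$'s own axiom $(a_v,\eta_{v,j})\mapsto\rho_v$, using consistency of $\Gamma_{j,t-1}$ and $|\nu|=|\sigma|$. You handle the case $\eta_{v,j}\pr\zeta$ by placing the responsible procedure below $v$, and the final assertion by length. Two points need fixing, though.

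First, the free-extension argument you lead with in the case $\eta_{v,j}\pr\zeta$ does not work and should be dropped. An admissible rule only guarantees \eqref{eq:certext}, a terminal $1$, and large length. In Lemma~\ref{lem:column} the string $z$ extends a splitting string $\tau^{i}$, which may pass through candidates of descendants of $v$. So an old second component below $z$ need not lie below $\eta_{v,j}$. This is exactly why (ii) carries the hypothesis $\theta\npreccurlyeq\sigma$: without that possibility, the hypothesis would be superfluous.

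Your fallback is the paper's argument, but you assert its key claim without proof. The missing justification is as follows. Write the old axiom as $(a_q,\eta_{q,j})\mapsto\rho_q$. Since $|\eta_{q,j}|>|\eta_{v,j}|$, and each candidate is longer than everything recorded before it, $q$ was called after $v$. Now $v$ stayed active from its call until the certification at stage $t$, and only the weakest active procedure calls. Hence $q$ was called as a descendant of $v$, and Lemma~\ref{lem:ancestry} gives $\theta\pre a_v\pr a_q=\sigma$.

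Second, in the final assertion Lemma~\ref{lem:free} is beside the point. It controls earlier strings lying below a free extension, not a new string lying below an earlier one. The bare comparison $|\eta_{q,j}|>|z|$ suffices. You also need to state that Step~2 ends stage $t$ without enumerating any axiom, which you omit.
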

\begin{proof}
The father property gives $\parr(v)=(r,\beta)$. The child survives
Step~1, so $a_v\pre A_t$. Since $|\rho_v|>|\beta|$ and
$\theta=A_t\restr(|\beta|+1)$, we have $\theta\pre a_v$.

For (ii), the strings $\zeta$ and $\eta_{v,j}$ are comparable, since
both are prefixes of $z$. Suppose that $\eta_{v,j}\pr\zeta$.
Write $\sigma=a_q$, $\zeta=\eta_{q,j}$, and $\nu=\rho_q$ for the
procedure responsible for the old axiom. Freshness of candidate lengths
implies that $q$ was called after $v$. Since $v$ remained active up to
the certification, $q$ was a descendant of $v$ when it was called.
The ancestry relations give $a_v\pr a_q$, contrary to
$\theta\npreccurlyeq\sigma$.

For (iii), a string comparable with $\flip\theta$ cannot extend
$\theta$. Part (ii) therefore gives $\zeta\pre\eta_{v,j}$.
The safety condition excludes $\flip\theta\pre\sigma$, so
\[
 \sigma\pr\flip\theta,
 \qquad\text{hence}\qquad \sigma\pre\alpha_r(\beta).
\]
The axiom in question is compatible with the old child axiom
$(a_v,\eta_{v,j})\mapsto\rho_v$. By the inductive consistency of
$\Gamma_{j,t-1}$, its output $\nu$ is compatible with $\rho_v$.
Every coding axiom has equal first-component and output lengths. Thus
\[
 |\nu|=|\sigma|\leq|\beta|,
 \qquad
 \nu\pre\rho_v\restr|\beta|=\beta.
\]
This is the required short-output conclusion.

Finally, no call occurs at stage $t$, and a candidate created after
$t$ is longer than $z$. It cannot be the second component of an axiom
whose second component is a prefix of $z$.
\end{proof}

\begin{lemma}[Comparison of coding axioms]\label{lem:Gamma}
For any historical procedures $p,q$ and any $j<2$,
\begin{equation}\label{eq:Gammacompare}
 \eta_{q,j}\pre\eta_{p,j}
 \quad\Longrightarrow\quad
 a_q\inc a_p\ \text{ or }\ \rho_q\pre\rho_p.
\end{equation}
Consequently $\Gamma_j$ is a consistent Turing functional.
\end{lemma}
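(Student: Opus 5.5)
We prove \eqref{eq:Gammacompare} by the second stage induction, at the stage $s$ at which the later of $p,q$ is called. Coding axioms are enumerated only at Step~3, one per side for the newly called procedure. So we fix a procedure $p$ called at stage $s$ and a historical $q$ called at an earlier stage with $\eta_{q,j}\pre\eta_{p,j}$. We must show $a_q\inc a_p$ or $\rho_q\pre\rho_p$. The converse situation, $\eta_{p,j}\pre\eta_{q,j}$ with $q$ older, cannot occur: $\eta_{p,j}$ is a free extension chosen at $s$, so it is longer than every earlier candidate. Consistency then follows at once. Two axioms $(a_q,\eta_{q,j})\mapsto\rho_q$ and $(a_p,\eta_{p,j})\mapsto\rho_p$ that apply to a common oracle pair have comparable first components and comparable second components. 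So after relabelling we have $\eta_{q,j}\pre\eta_{p,j}$ and $a_q,a_p$ comparable. Then \eqref{eq:Gammacompare} gives $\rho_q\pre\rho_p$, and the outputs are compatible.

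\textbf{Step 1 (where $\eta_{q,j}$ sits).} By Lemma~\ref{lem:free}, $\eta_{q,j}\pre\eta_{p,j}$ forces $\eta_{q,j}$ to be a prefix of the base string of $p$ on side $j$. Two cases follow.

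\emph{Ordinary call, or the non-certified side of a certified call.} The base is an old evaluation $\xs j s n$ with $n\in\{k,|\beta|\}$. By Proposition~\ref{prop:Xi} and Lemma~\ref{lem:persistence}, each prefix of this evaluation that is a recorded candidate comes from an axiom $\mu_{q'}\mapsto\eta_{q',j}$ with $\mu_{q'}\pre m_s$. Freshness makes the candidate determine its procedure, so $q'=q$ and $\mu_q\pre m_s$. Lemma~\ref{lem:ancestry} then gives $\rho_q\pre D_s$ and $a_q\pre A_s$. Since $\rho_p=D_s\restr k$ and $a_p=A_s\restr k$, and the length assertion in Lemma~\ref{lem:persistence} (or \eqref{eq:activegrowth} for active ancestors) gives $|\rho_q|<|\rho_p|$, we obtain $\rho_q\pre\rho_p$.

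\emph{Certified side $j=j_*$.} The base is $z=\zeta_{r,\delta}(\rho_p)$, with $r$ the parent and $\delta=\beta\cat i$ certified at an earlier stage $t$. By the final clause of Lemma~\ref{lem:certaxioms}, the axiom of $q$ lies in $\Gamma_{j,t-1}$.

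\textbf{Step 2 (the certified side).} This is the main obstacle, and it is where the safety condition \eqref{eq:safety} is used. Let $\theta=\alpha_r(\delta)$, and note $\theta\pre a_p$. There are two subcases.
\begin{enumerate}
\item If $\theta\npreccurlyeq a_q$, then $a_q$ and $a_p$ are either incomparable or satisfy $a_q\pr\theta$. In the first case we are done. In the second, $a_q$ is comparable with $\flip\theta$, so Lemma~\ref{lem:certaxioms}(iii) gives $\rho_q\pre\beta\pre\delta\pre\rho_p$.
\item If $\theta\pre a_q$, Lemma~\ref{lem:certaxioms}(ii) does not apply. Instead we argue from the fact that $\eta_{q,j}\pre z$ together with \eqref{eq:certext}. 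Either $\eta_{q,j}\pre\eta_{v,j}$, where $v$ is the child of $r$ at stage $t$, or $q$ is a descendant of $v$ called before $t$. In the second situation, $\rho_q$ and $a_q$ extend $\rho_v$ and $a_v$ along the ancestry relations. Then $\rho_p$, which extends $\delta\not\pre\rho_v$ in the $D$-coordinate, satisfies $\rho_p\inc\rho_q$. This must be converted into $a_q\inc a_p$. To do so we use the fact that $a_v\pre A_t$ while $a_p\pre A_s$, and that a change separating them lies below $|\rho_v|$.
\end{enumerate}
The first situation reduces to comparing $q$ with $v$, which is handled by the induction hypothesis at stage $t$. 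It then transfers to $p$ because $\delta$ and $\rho_v$ agree below $|\beta|$.

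I expect the delicate point to be subcase (ii): turning $D$-incompatibility of the roots into $A$-incompatibility of the labels. If this fails, the fallback is to strengthen the induction hypothesis. We would additionally carry the invariant $\theta\pre a_q\Rightarrow q\geq v$ for every axiom below a certificate string, proved at certification stages from \eqref{eq:safety}.
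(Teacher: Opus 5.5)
Your skeleton matches the paper's: the split by call type, the reduction via Lemma~\ref{lem:free} to the base string, Lemma~\ref{lem:certaxioms} on the selected side, and the final deduction of consistency. However, both cases contain a false step.

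\textbf{Certified side.} This case fails at its first line. You assert $\theta\pre a_p$, but the construction gives the opposite, $\flip\theta\pre a_p$. At stage $s$ the node $\delta=D_s\restr(|\beta|+1)$ lies in $\dom\alpha_r$ and is a prefix of $D_s$. It is not correct, however, because $\beta$ is the longest correct node. Since $\alpha_r(\beta)\pre A_s$, the label $\theta$ disagrees with $A_s$ exactly at position $|\beta|$ (compare Lemma~\ref{lem:history}(i)). Hence $\flip\theta\pre A_s\restr k=a_p$.

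With this fact your open subcase (ii) is immediate: $\theta\pre a_q$ and $\flip\theta\pre a_p$ give $a_q\inc a_p$. The whole case then takes one line. If $a_q$ is compatible with $a_p$, it is comparable with $\flip\theta$, and Lemma~\ref{lem:certaxioms}(iii) gives $\rho_q\pre\beta\pr\rho_p$.

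The route you sketch for (ii) also cannot work as written. In fact $\delta\pre\rho_v$: the child $v$ survived Step~1 at stage $t$, so $\rho_v\pre D_t$, and $\delta=D_t\restr(|\beta|+1)$. The certified call is separated from the old child $v$ in the $A$-coordinate, not the $D$-coordinate. So there is no root incompatibility to convert, and the fallback invariant is unnecessary.

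\textbf{Ordinary case and non-selected side.} Here you claim that every recorded candidate $\eta_{q,j}\pre\xs j s n$ has $\mu_q\pre m_s$. This is false. Proposition~\ref{prop:Xi} says only that the evaluation equals the longest applicable output; its prefixes need not be outputs of applicable axioms.

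A certificate string is an arbitrary extension of $\eta_{v,j}$. It may extend candidates of descendants $q$ of $v$, which is exactly the situation treated in Lemma~\ref{lem:certaxioms}(ii). Suppose $A$ then changes at position $|\beta|$. A certified call built on such a string, and its later ordinary descendants, have bases extending $\eta_{q,j}$. Yet $\mu_q$ no longer applies, since $a_q$ extends $\theta$ while $A_s$ extends $\flip\theta$. In that situation the correct conclusion is $a_q\inc a_p$, which your argument never produces.

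The missing step is to use the induction hypothesis, as the paper does:
\begin{enumerate}
\item Choose an applicable $w$ with $\eta_{w,j}=\xs j s n$, taking the root if this string is empty. Then $\eta_{q,j}\pre\eta_{w,j}$.
\item The comparison property for the older pair $(q,w)$ gives $a_q\inc a_w$ or $\rho_q\pre\rho_w$.
\item Since $a_w\pre A_s\restr n\pre a_p$ and $\rho_w\pre D_s\restr n\pre\rho_p$, this transfers to $a_q\inc a_p$ or $\rho_q\pre\rho_p$.
\end{enumerate}
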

\begin{proof}
Consider a new procedure $p$ at stage $s$. For $q\neq p$ with
$\eta_{q,j}\pre\eta_{p,j}$, freshness implies that $q$ was called
before $s$. We first record the following consequence of the inductive
comparison property:
\begin{equation}\label{eq:oldbaselemma}
 \eta_{q,j}\pre\xs j s n
 \quad\Longrightarrow\quad
 a_q\inc(A_s\restr n)\ \text{ or }\ \rho_q\pre D_s\restr n.
\end{equation}
To prove it, choose a historical procedure $w$ whose modulus string applies to
$m_s\restr n$ and whose candidate $\eta_{w,j}$ is the longest applicable
$\Xi_j$-string there. Such a $w$ exists, including the root when this
string is empty.
Then $|\rho_w|\leq n$ and $\eta_{q,j}\pre\eta_{w,j}$.
By the comparison property at earlier stages, either $a_q\inc a_w$
or $\rho_q\pre\rho_w$. Since $\mu_w\pre m_s$, we have
$a_w\pre A_s\restr n$ and $\rho_w\pre D_s\restr n$.
This proves~\eqref{eq:oldbaselemma}.

If $\eta_{p,j}$ is a free extension of $\xs j s n$, where $n$ is
its root length or its father's length, Lemma~\ref{lem:free} reduces
$\eta_{q,j}\pre\eta_{p,j}$ to $\eta_{q,j}\pre\xs j s n$.
The first case of~\eqref{eq:oldbaselemma} gives incompatibility with
$a_p$, and the second gives $\rho_q\pre\rho_p$.

Otherwise $p$ is a certified call on its parent's selected side.
Write its parent as $r$, its father as $\beta$, and
$\delta=D_s\restr(|\beta|+1)$. Let $t<s$ be the certificate stage
and $\theta=\alpha_r(\delta)$. By maximal correctness of the father,
$\delta$ is not correct now. Its predecessor is correct and its node
is a prefix of $D_s$. Therefore
\begin{equation}\label{eq:wronglast}
 \flip\theta\pre a_p.
\end{equation}
By the free-extension lemma, $\eta_{q,j}$ is a prefix of the relevant
certificate string. Its coding axiom was already present before $t$,
by the last assertion of Lemma~\ref{lem:certaxioms}. If $a_q$ is
compatible with $a_p$, it is comparable with $\flip\theta$.
Part (iii) of that lemma yields
$\rho_q\pre\beta\pr\rho_p$. This proves~\eqref{eq:Gammacompare}
for the new call. At stages without a call, there is no new pair of
coding axioms to check. Together with Lemma~\ref{lem:certaxioms}, this
completes the second induction.

Finally, two axioms can apply to a common pair of oracles only if their
first components are compatible and their second components are
comparable. Apply~\eqref{eq:Gammacompare} in the direction of the
shorter second component. Their outputs are comparable, which is
exactly consistency.
\end{proof}

\begin{corollary}[No deeper old axiom]\label{cor:nodeeper}
For every historical $p$ and $j<2$, there is no coding axiom
$(\sigma,\zeta)\mapsto\nu$ in $\Gamma_j$ such that
$a_p\pr\sigma$ and $\zeta\pre\eta_{p,j}$.
\end{corollary}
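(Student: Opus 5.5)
We argue by contradiction, using the comparison property \eqref{eq:Gammacompare} of Lemma~\ref{lem:Gamma} together with the structure of coding axioms. Every coding axiom in $\Gamma_j$ is enumerated at a call stage by~\eqref{eq:axioms}, so it has the form $(a_q,\eta_{q,j})\mapsto\rho_q$ for some historical procedure $q$, including the root's empty axiom. Suppose, then, that some axiom $(a_q,\eta_{q,j})\mapsto\rho_q$ satisfies
\[
 a_p\pr a_q\qquad\text{and}\qquad \eta_{q,j}\pre\eta_{p,j}.
\]

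First apply Lemma~\ref{lem:Gamma} with the roles as stated. From $\eta_{q,j}\pre\eta_{p,j}$, either $a_q\inc a_p$ or $\rho_q\pre\rho_p$. The first alternative contradicts $a_p\pr a_q$. Hence $\rho_q\pre\rho_p$.

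Next compare lengths. Every coding axiom has equal lengths of first component and output, so $|a_q|=|\rho_q|$ and $|a_p|=|\rho_p|$; this holds because $|\alpha_q(\rho_q)|=|\rho_q|$ by~\eqref{eq:call}. From $a_p\pr a_q$ we get $|\rho_p|=|a_p|<|a_q|=|\rho_q|$. This contradicts $\rho_q\pre\rho_p$, which forces $|\rho_q|\le|\rho_p|$. So no such axiom exists.

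The only point needing care is that \emph{every} axiom of $\Gamma_j$ really comes from a historical procedure, including axioms of cancelled procedures. Lemma~\ref{lem:Gamma} is stated for arbitrary historical procedures, since cancelled data are retained, so it covers these cases. The degenerate case $q=p$ is excluded by the strictness of $a_p\pr a_q$. I expect no real obstacle here: the corollary is a direct length-counting consequence of \eqref{eq:Gammacompare}.
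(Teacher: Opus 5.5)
Your proposal is correct and is essentially the paper's proof. You write the axiom as $(a_q,\eta_{q,j})\mapsto\rho_q$, use Lemma~\ref{lem:Gamma} to get $\rho_q\pre\rho_p$, and then equal first-component and output lengths with $a_p\pr a_q$ give $|\rho_p|<|\rho_q|$, a contradiction.
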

\begin{proof}
Write the axiom as $(a_q,\eta_{q,j})\mapsto\rho_q$.
Lemma~\ref{lem:Gamma} would give $\rho_q\pre\rho_p$, whereas
$a_p\pr a_q$ and equality of first-component and output lengths give
$|\rho_p|<|\rho_q|$.
\end{proof}

\subsection{The history of a certified call}

A certified call may occur long after its certificate was registered.
The following lemma identifies what can have happened in the intervening
stages and, in particular, shows that no procedure called after the
certificate stage can still have an applicable modulus string at the
time of the call.
\begin{lemma}\label{lem:history}
Suppose $p$ is called at stage $s$ using a certificate registered at
stage $t<s$ by its parent $r$. Write $\parr(p)=(r,\beta)$ and
$\delta=\beta\cat i$ for the certified node. Then:
\begin{enumerate}
\item The first position at which $A_t$ and $A_s$ differ is $|\beta|$.
\item Every old procedure $q$ with $\mu_q\pre m_s$ was called before
stage $t$. Consequently, $\xi^*_{j,s}\pre\xi_{j,t}$ for both sides.
\item If an axiom in $\Gamma_{j,s-1}$ has its second component contained
in $\xi_{j,t}$ or in any certificate string registered at $t$, then
that axiom already belongs to $\Gamma_{j,t}$.
\end{enumerate}
\end{lemma}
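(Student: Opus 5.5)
We prove the three parts in order, working with the stage-$t$ picture: at stage $t$ the procedure $r$ is active with longest correct node $\beta$, its child $v$ has $\parr(v)=(r,\beta)$, and $r$ certifies $\delta=D_t\restr(|\beta|+1)$ with $\alpha_r(\delta)=\theta=A_t\restr(|\beta|+1)$. At stage $s$ the father of $p$ is again $\beta$, and by \eqref{eq:call} we have $\delta=D_s\restr(|\beta|+1)\in\dom\alpha_r$.

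\emph{Part (i).} Since $r$ is active at $s$, Lemma~\ref{lem:ancestry} makes $(r,\beta)$ correct at $s$, so $A_s\restr|\beta|=\alpha_r(\beta)=A_t\restr|\beta|$. Maximal correctness of $\beta$ says $\delta$ is not correct at $s$. Because $\delta\pre D_s$, this failure must come from the label, giving $\theta\npreccurlyeq A_s$. As $\theta$ and $\alpha_r(\beta)$ differ only in the last bit, $A_s(|\beta|)\neq A_t(|\beta|)$. This is exactly \eqref{eq:wronglast}, already used in Lemma~\ref{lem:Gamma}.

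\emph{Part (ii).} Take an old $q$ with $\mu_q\pre m_s$, and suppose for contradiction that $q$ was called at some stage $w\geq t$.
\begin{enumerate}
\item $w\neq t$, since a certification stage makes no call.
\item So $w>t$. By monotonicity and \eqref{eq:change}, the approximations must have been stable on $(A,D)\restr|\rho_q|$ from $w$ through $s$.
\item By (i), a change at position $|\beta|$ occurred at some stage $u\in(t,s]$. Hence $|\rho_q|\leq|\beta|$, or else $u\leq w$.
\item Next compare $q$ with $r$ via the ancestry structure at stage $w$. If $q$ is a descendant of $r$, Lemma~\ref{lem:ancestry} gives $|\rho_q|>|\beta|$: its ancestor at the level below $r$ has father at least $\beta$, since every father added at or after $t$ extends $\delta$ or $\beta$. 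This is a contradiction.
\item If $q$ is not below $r$, then $q$'s parent is a proper ancestor of $r$, or $q$ is on a branch cancelled while $r$ stays active. Then $q$ is weaker than $r$ only if it is a descendant, and $r$ survives from $t$ to $s$. So the only calls between $t$ and $s$ are descendants of $r$.
\end{enumerate}
These cases exhaust the possibilities, so every such $q$ was called before $t$. Nonreturn \eqref{eq:nonreturn} then shows that $\mu_q$ also applied to $m_t$. Every string contributing to $\xi^*_{j,s}$ is therefore among those contributing to $\xi_{j,t}$, which gives $\xi^*_{j,s}\pre\xi_{j,t}$.

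\emph{Part (iii).} A coding axiom is $(a_q,\eta_{q,j})\mapsto\rho_q$, enumerated at $q$'s call stage. If its second component is a prefix of a certificate string registered at $t$, the final clause of Lemma~\ref{lem:certaxioms} places it in $\Gamma_{j,t-1}$. If instead it is a prefix of $\xi_{j,t}$, freshness applies: any candidate chosen after $t$ has length exceeding the historical bound at $t$, and that bound is at least $|\xi_{j,t}|$. Such a candidate cannot be a prefix of $\xi_{j,t}$, so $q$ was called by stage $t$.

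\emph{Main obstacle.} The hardest step is (ii). One must exclude calls at stages in $(t,s)$ whose short modulus strings survive to $s$, which requires the case analysis on the position of $q$ relative to $r$. The step that makes this work is (i): the disagreement at $|\beta|$ destroys every modulus string longer than $|\beta|$ created after $t$, while every call below $r$ made after $t$ has root length exceeding $|\beta|$. I would try to argue the ancestry claim in (ii)(iv) by induction on the intervening stages.
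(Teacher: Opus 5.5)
Your Parts (i) and (iii) are correct and essentially match the paper; in (iii) the paper simply invokes freshness, which is all the last clause of Lemma~\ref{lem:certaxioms} amounts to. Part (ii), however, has a genuine gap at your fourth step.

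\textbf{The father claim is unjustified.} You assert that every father chosen for a child of $r$ at or after stage $t$ extends $\beta$, and this need not hold. Between $t$ and $s$ the approximations may change at a position in $[|\rho_r|,|\beta|)$, which does not cancel $r$, and later revert. So the longest correct node of $r$ can temporarily be a proper prefix of $\beta$, and a child called at that time can have root length at most $|\beta|$. That is exactly one of the situations that must be ruled out.

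\textbf{Even granting $|\rho_q|>|\beta|$, no contradiction follows.} Your third step only gives the disjunction ``$|\rho_q|\leq|\beta|$, or the change at position $|\beta|$ occurred at a stage $\leq w$,'' and you never refute the second alternative. Your closing heuristic, that the disagreement at $|\beta|$ destroys every modulus string longer than $|\beta|$ created after $t$, fails for strings created after that change. Concretely, suppose the child $v$ of $r$ leading to $q$ had a father extending $\beta$ and $\mu_v\pre m_s$. The approximations are frozen below $|\rho_v|$ from $v$'s call to $s$. This forces that father to be exactly $\beta$, and forces the change at $|\beta|$ to precede $v$'s call. Then $v$ is a certified call using the very certificate that $p$ later uses, and no length comparison excludes this.

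\textbf{The missing idea is to ask how $v$ was cancelled.}
\begin{enumerate}
\item Since $\mu_v\pre\mu_q\pre m_s$, nonreturn makes $\mu_v$ apply from $v$'s call through $s$.
\item The procedure $r$ and its ancestors survive throughout.
\item $v$ is not active when $r$ calls $p$ in Step~3 of stage $s$, and no Step~2 action occurs at that stage.
\item Hence $v$ was cancelled by a Step~2 action of $r$ itself at some $u\in(t,s)$. That action adds the one-bit successor $\lambda^+$ of $v$'s father $\lambda$.
\item We have $|\lambda^+|\leq|\rho_v|$, and the approximations below $|\rho_v|$ stay frozen until $s$. So $\lambda^+$ and its label remain correct through $s$, and maximality of $\beta$ gives $\lambda^+\pre\beta$.
\item But $\beta$ was in $\dom\alpha_r$ before $t$, whereas $\lambda^+$ entered only at $u>t$. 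This contradicts the tree property of the domain.
\end{enumerate}
This argument needs no comparison of $|\rho_q|$ with $|\beta|$ and does not use Part~(i). With it in place, the rest of your Part~(ii) goes through as written: there is no call at $t$, only descendants of $r$ are called during $(t,s)$, and nonreturn gives $\xi^*_{j,s}\pre\xi_{j,t}$.
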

\begin{proof}
The label of $\beta$ is a prefix of both $A_t$ and $A_s$.
At stage $t$, the label of $\delta$ extends it correctly by one bit.
At stage $s$, the node $\delta$ is a prefix of $D_s$ but its label is
incorrect, since $\beta$ is longest correct. This proves (i).

Here is a direct proof of (ii) using the persistence mechanism.
Suppose that an old $q$ was called after $t$ and that
$\mu_q\pre m_s$. The procedure $r$ survives from $t$ to $s$ and
cancels all its descendants at $t$. Thus $q$ was called below $r$.
Let $v$ be the immediate child of $r$ on the ancestry of $q$.
It was called after $t$, and $\mu_v\pre\mu_q\pre m_s$.
It is no longer active when $p$ is called. Because its modulus string
has never ceased to apply and $r$ survives, its cancellation must be a Step~2
action of $r$ at some stage $u\in(t,s)$.

Let $\lambda$ be the father of $v$. At stage $u$, $r$ adds the
correct node $\lambda^+$ extending $\lambda$ by one bit.
Since $|\lambda^+|\leq|\rho_v|$, this node and its label remain
correct through $s$. Therefore the longest correct node $\beta$ at
stage $s$ extends $\lambda^+$. But $\beta$ already belonged to
$\dom\alpha_r$ before stage $t$, while $\lambda^+$ was first
added after $t$. This contradicts the tree property of the domain.
No call occurs at $t$ itself. Hence all old applicable procedures
were called before $t$.

Their modulus strings also applied at stage $t$, by nonreturn.
Taking the union of their candidate strings proves
$\xi^*_{j,s}\pre\xi_{j,t}$.
Part (iii) follows from freshness: a candidate chosen after $t$ is
longer than every string mentioned in (iii), so cannot be their prefix.
\end{proof}

\subsection{An available side at every stage}

The central safety invariant is that, for every relevant flipped
$A_s$-prefix, at least one of the two sides remains free of a conflicting
old coding axiom. This is the mechanism that guarantees the existence
of a safe side whenever a procedure is ready to act.

\begin{lemma}[Available-side invariant]\label{lem:available}
For every stage $s$ and every $\ell>0$, some $j<2$ satisfies
\eqref{eq:available}.
\end{lemma}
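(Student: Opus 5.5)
The plan is a stage induction on $s$, proving the invariant simultaneously for all $\ell>0$. Call an axiom $(\sigma,\zeta)\mapsto\nu$ in $\Gamma_{j,s}$ \emph{$\ell$-conflicting on side $j$ at $s$} if $\flip{(A_s\restr\ell)}\pre\sigma$ and $\zeta\pre\xi_{j,s}$. Every nonempty coding axiom has the form $(a_q,\eta_{q,j})\mapsto\rho_q$ for a historical procedure $q$. So an $\ell$-conflicting axiom is a $q$ with $\eta_{q,j}\pre\xi_{j,s}$ and $a_q$ departing from $A_s$ exactly at position $\ell-1$.

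By Proposition~\ref{prop:Xi}, $\xi_{j,s}=\eta_{w,j}$, where $w$ is the procedure whose applicable candidate is longest on side $j$. Here $w$ is either the newly called procedure or a surviving or historical procedure with $\mu_w\pre m_s$. Lemma~\ref{lem:Gamma} then splits every conflicting $q$ into two cases: $a_q\inc a_w$, or $\rho_q\pre\rho_w$. The second case with $|a_q|\leq|a_w|$ is impossible, because $a_w\pre A_s$ by Lemma~\ref{lem:ancestry}. Hence only the case $a_q\inc a_w$ with disagreement at position $\ell-1<|a_w|$ needs real work. The invariant must therefore show that such disagreements never occur on both sides at once.

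\textbf{Stage $0$ and stages without a call.} At stage $0$ only empty axioms exist, so there is nothing to check. At a stage with no call, no axioms are added and $\xi_{j,s}=\xi^*_{j,s}\pre\xi_{j,s-1}$ by~\eqref{eq:oldoutputs}. If $A_s\restr\ell=A_{s-1}\restr\ell$, the induction hypothesis at $s-1$ applies directly, since the set of conflicting axioms can only shrink.

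If $A$ changed below $\ell$, say first at position $n<\ell$, then by~\eqref{eq:change} and nonreturn every modulus string applicable at $s$ has length at most $n$. Thus $|a_w|\leq n<\ell$, which excludes the case $a_q\inc a_w$ at position $\ell-1$. The same reduction handles the ordinary-call case. There the only new axioms on each side are the new procedure's own, whose label is a prefix of $A_s$ and therefore not conflicting. Each new string is a free extension of $\xs j s k$, and Lemma~\ref{lem:free} shows that no other old axiom enters.

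\textbf{The certified call (main obstacle).} Suppose $p$ is called at $s$ by $r$ with father $\beta$, using a certificate registered at $t<s$ for $\delta=\beta\cat i$, with $\theta=\alpha_r(\delta)$ and selected side $j_*$. The two sides require different arguments.

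On the side $1-j_*$, the new string is a free extension of $\xs{1-j_*}s{|\beta|}$. By Lemma~\ref{lem:history}(ii) this lies below $\xi_{1-j_*,t}$, and by Lemma~\ref{lem:history}(i) $A_s$ and $A_t$ agree below $|\beta|$. The induction hypothesis at $t$ therefore handles $\ell\leq|\beta|$; for $\ell>|\beta|$, the shortness argument above applies.

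On the side $j_*$, the string extends a certificate string $z$. Lemma~\ref{lem:certaxioms} together with Lemma~\ref{lem:history}(iii) controls every old axiom whose second component lies below $z$. If $\sigma$ is comparable with $\flip\theta=A_s\restr(|\beta|+1)$, then Lemma~\ref{lem:certaxioms}(iii) gives short output, so $|\sigma|\leq|\beta|$. If $\theta\npreccurlyeq\sigma$, then Lemma~\ref{lem:certaxioms}(ii) gives $\zeta\pre\eta_{v,j_*}$, which was available at $t$. The safety condition~\eqref{eq:safety}, chosen at $t$, excludes $\flip\theta\pre\sigma$ outright.

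The delicate step is to combine these facts. We must show that, for each $\ell$, one of the two sides is free. For $\ell\leq|\beta|$ this follows from the stage-$t$ invariant, transported by Lemma~\ref{lem:history}. For $\ell=|\beta|+1$, the side $j_*$ is free by safety together with~\eqref{eq:wronglast}. For $\ell>|\beta|+1$, both sides are free, because every applicable label other than $a_p$ departs from $A_s$ at position at most $|\beta|$. I expect most of the effort to go into this bookkeeping of which stage-$t$ side was free for which $\ell$, and into checking that it survives the intervening cancellations.
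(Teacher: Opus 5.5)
\textbf{The certified-call assembly is wrong at $\ell=|\beta|+1$.} Your overall route is the paper's. You induct on $s$ and compare with $t=s-1$ or with the certificate stage $t$. For $\ell\le|\beta|$ you transport the stage-$t$ free side using Lemma~\ref{lem:history} and Lemma~\ref{lem:certaxioms}(ii). For large $\ell$ you use the shortness argument, which is the content of Corollary~\ref{cor:nodeeper}.

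You yourself note that $A_s\restr(|\beta|+1)=\flip\theta$. So the flipped prefix at this length is $\flip{\flip\theta}=\theta$, not $\flip\theta$. The safety condition forbids $\flip\theta\pre\sigma$. It therefore only excludes first components that \emph{agree} with $A_s$ through position $|\beta|$, and those do not conflict at this length. Neither safety nor \eqref{eq:wronglast} helps here.

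In fact side $j_*$ genuinely fails at this $\ell$. Let $v$ be the child present at certification. It has $\theta\pre a_v$ by Lemma~\ref{lem:certaxioms}(i), and $\eta_{v,j_*}\pr z\pr\eta_{p,j_*}=\xi_{j_*,s}$ by \eqref{eq:certext} and the free extension. Hence $v$'s axiom $(a_v,\eta_{v,j_*})\mapsto\rho_v$ violates \eqref{eq:available} on side $j_*$ for $\ell=|\beta|+1$. This is inherent to the certified side: its candidate extends the old child's candidate, while $A$ has changed at $|\beta|$.

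\textbf{The repair is already in your side-$(1-j_*)$ paragraph.} There the new string is a free extension of $\xs{1-j_*}{s}{|\beta|}$. That string is realized by an old applicable $w$ with $|a_w|\le|\beta|$ and $a_w\pre A_s$. Your Lemma~\ref{lem:Gamma} reduction then makes side $1-j_*$ free for \emph{every} $\ell>|\beta|$, including $\ell=|\beta|+1$. This is exactly the paper's choice: for $\ell>n$ it uses the side not selected by the certificate.

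Your claim that side $j_*$ is also free for $\ell>|\beta|+1$ happens to be true, by the trichotomy from Lemma~\ref{lem:certaxioms}(ii)--(iii). It is not needed.

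Two smaller points for $\ell\le|\beta|$. On side $1-j_*$ you also need Lemma~\ref{lem:history}(iii) to place the offending axiom in $\Gamma_{1-j_*,t}$ before invoking the stage-$t$ invariant. When the stage-$t$ free side is $j_*$, you need $\eta_{v,j_*}\pre\xi_{j_*,t}$. This holds because $v$'s modulus string still applied at $t$ and no $\Xi$-axiom was enumerated at that stage.
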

\begin{proof}
Induct on $s$. The assertion holds at stage $0$. At stage $s$, a new
coding axiom has first component a prefix of $A_s$, so it cannot
extend $\flip{(A_s\restr\ell)}$. It is therefore enough to exclude
old coding axioms.

If there is no call at $s$, or the call is ordinary, put $t=s-1$.
If a certified call is made, let $t$ be its certificate-registration
stage. Let $n$ be the length of the longest common initial segment of
$A_t$ and $A_s$, allowing $n=\infty$ when they are equal.
For a certified call, Lemma~\ref{lem:history}(i) says that $n$ is
its father's length.

First suppose $\ell\leq n$. At stage $t$ choose a side $j$ that
satisfies the invariant for $\ell$. The flipped prefixes at stages
$t$ and $s$ agree. Suppose, toward a contradiction, that an old axiom
$(\sigma,\zeta)\mapsto\nu$ has
\[
 \flip{(A_s\restr\ell)}\pre\sigma,
 \qquad \zeta\pre\xi_{j,s}.
\]
With no call, $\xi_{j,s}=\xi^*_{j,s}\pre\xi_{j,t}$, contradicting
our choice at $t$. With an ordinary call, the free-extension lemma
reduces $\zeta$ to a prefix of the old base, hence of $\xi_{j,t}$;
again we have a contradiction.

For a certified call, first consider the side that uses a free
extension of the old base truncated at the father's length. The
free-extension lemma and Lemma~\ref{lem:history}(ii) reduce $\zeta$
to a prefix of $\xi_{j,t}$; part (iii) of that lemma puts its axiom
at stage $t$. This contradicts the invariant at $t$.
On the selected side, the free-extension lemma instead puts $\zeta$
below a certificate string. Its axiom was already present at $t$.
Let $\theta$ be the label registered with that certificate, and let
$v$ be the child present at its registration. Since $\ell\leq n$
and $\sigma$ extends the flipped prefix at length $\ell$, it cannot
extend $\theta$, which is a prefix of $A_t$ of length $n+1$.
Lemma~\ref{lem:certaxioms}(ii) gives
$\zeta\pre\eta_{v,j}\pre\xi_{j,t}$. The last inclusion holds even
though $v$ was cancelled at $t$: its modulus string still applied at that
certification stage and no new $\Xi$-axiom was enumerated there.
This is the remaining contradiction for $\ell\leq n$.

Now suppose $\ell>n$. Choose a side $j$ on which $\xi_{j,s}$ is
either $\xi^*_{j,s}$ or a free extension of an initial segment of
$\xi^*_{j,s}$. Both sides qualify with no call or an ordinary call;
in a certified call, use the side not selected by the certificate.
Any old second component $\zeta\pre\xi_{j,s}$ is consequently a
prefix of $\xi^*_{j,s}$.

Choose an old applicable procedure $q$ such that
$\eta_{q,j}=\xi^*_{j,s}$, using the root if this string is empty.
In the first two cases $q$ was called by $t=s-1$; in the certified
case it was called before $t$ by Lemma~\ref{lem:history}(ii).
Since its modulus string applies at both $t$ and $s$, its root and label
are prefixes of both approximations. In particular $|\rho_q|\leq n$,
as $A_t$ and $A_s$ disagree at $n$. Thus
\[
 a_q\pre A_s\restr n\pr\flip{(A_s\restr\ell)}.
\]
An old axiom whose first component extends this flipped prefix and
whose second component is contained in $\xi^*_{j,s}=\eta_{q,j}$
would violate Corollary~\ref{cor:nodeeper}. This proves the invariant.
\end{proof}

\begin{corollary}[Safety before action]\label{cor:preaction}
Suppose a child $q$ is active at the end of stage $s-1$ and survives
Step~1 of stage $s$. Let $\parr(q)=(p,\beta)$ and
$\theta=A_s\restr(|\beta|+1)$. At least one side satisfies the safety
condition~\eqref{eq:safety} with respect to the old axioms.
\end{corollary}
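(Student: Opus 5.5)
The plan is to derive this directly from Lemma~\ref{lem:available}, applied at the stage $s-1$ with $\ell=|\beta|+1$, and then to check that the old axioms relevant to~\eqref{eq:safety} are covered by that invariant. Note that~\eqref{eq:safety} concerns $\dom\Gamma_{j,s-1}$ and second components $\zeta\pre\eta_{q,j}$. The invariant at stage $s-1$ concerns the same axiom set, but with $\zeta\pre\xi_{j,s-1}$ and flipped prefix $\flip{(A_{s-1}\restr\ell)}$. So two comparisons are needed: between the candidate $\eta_{q,j}$ and the evaluation $\xi_{j,s-1}$, and between the prefixes $A_{s-1}\restr\ell$ and $A_s\restr\ell=\theta$.

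For the first comparison, $q$ is active at the end of stage $s-1$. The active-root identity~\eqref{eq:activeidentity} at $s-1$ gives $\eta_{q,j}=\xx j {s-1} {|\rho_q|}$. Since $\xi_{j,s-1}$ is a string by Proposition~\ref{prop:Xi}, we get $\eta_{q,j}\pre\xi_{j,s-1}$. Hence any axiom violating~\eqref{eq:safety} on side $j$ also has its second component below $\xi_{j,s-1}$.

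For the second comparison, $q$ survives Step~1 of stage $s$, so $\mu_q\pre m_s$ and $\mu_q=m_{s-1}\restr|\rho_q|=m_s\restr|\rho_q|$. By~\eqref{eq:change}, $A_{s-1}$ and $A_s$ agree below $|\rho_q|$. Lemma~\ref{lem:ancestry} gives $|\beta|<|\rho_q|$, so $\ell=|\beta|+1\leq|\rho_q|$. Therefore $A_{s-1}\restr\ell=\theta$, and the two flipped prefixes coincide.

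With both comparisons in hand, choose the side $j$ supplied by Lemma~\ref{lem:available} at stage $s-1$ for this $\ell$. A hypothetical pair $(\sigma,\zeta)\in\dom\Gamma_{j,s-1}$ with $\flip\theta\pre\sigma$ and $\zeta\pre\eta_{q,j}$ would also satisfy $\zeta\pre\xi_{j,s-1}$ and $\flip{(A_{s-1}\restr\ell)}\pre\sigma$, contradicting~\eqref{eq:available}.

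The only delicate point I expect is the index bookkeeping in the second comparison. We need that the change test~\eqref{eq:change} at length $|\rho_q|$ really covers position $|\beta|$, i.e.\ that the length $|\mu_q|=|\rho_q|$ is strictly greater than $|\beta|$. The root-length inequality in Lemma~\ref{lem:ancestry} supplies exactly this. We also need that the case $\ell>0$ is satisfied, which is immediate from $\ell=|\beta|+1$.
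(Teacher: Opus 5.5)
Your proof is correct and is essentially the paper's argument: apply the available-side invariant at stage $s-1$ (rather than $s$) with $\ell=|\beta|+1\leq|\rho_q|$, use survival through Step~1 to get $A_{s-1}\restr\ell=A_s\restr\ell$, and use $\eta_{q,j}\pre\xi_{j,s-1}$ to pass from the evaluation to the smaller candidate. The only difference is presentation: you spell out details the paper compresses into one line, namely $|\mu_q|=|\rho_q|>|\beta|$ and the appeal to~\eqref{eq:change}.
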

\begin{proof}
Put $\ell=|\beta|+1\leq|\rho_q|$. Survival implies
$A_s\restr\ell=A_{s-1}\restr\ell$. Apply
Lemma~\ref{lem:available} at stage $s-1$, not at stage $s$.
For both sides $\eta_{q,j}\pre\xi_{j,s-1}$, so the side it supplies
is safe for the smaller candidate prefix as well.
\end{proof}

\subsection{Permanent procedures and coding at the limit}

Assume now that $A\nleT D$. Call a procedure \emph{permanent} if it
is never cancelled. For such a procedure $p$, a node $\beta$ is
\emph{truly correct} if $\beta\pre D$ and $\alpha_p(\beta)\pre A$.
This is equivalent to the pair being correct at all sufficiently late
stages, and also to its being correct at infinitely many stages.

\begin{lemma}\label{lem:permanent}
Every permanent procedure has a longest truly correct node, requires
attention at only finitely many stages, and has a permanent child.
\end{lemma}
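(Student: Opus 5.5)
The plan is to fix a permanent procedure $p$ and prove the three assertions in order, each using the previous one.

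\emph{Longest truly correct node.} Since $p$ is permanent, $\mu_p\pre m$. By Lemma~\ref{lem:ancestry}, $\rho_p\pre D$ and $a_p\pre A$, so the root is truly correct. By the same lemma, the truly correct nodes form a chain of prefixes of $D$, and $\alpha_p$ preserves proper extension. Suppose this chain were infinite. Then $A=\bigcup\{\alpha_p(\beta):\beta\pre D,\ \beta\in\dom\alpha_p\}$, because the labels have length $|\beta|$ and are prefixes of $A$. Since $\alpha_p$ is partial computable, this gives $A\leT D$, contradicting the hypothesis $A\nleT D$. So there is a longest truly correct node; call it $\beta^*$.

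\emph{Finitely many attentions.} Take a stage $s_0$ after which $(p,\beta^*)$ is correct forever, $A_s$ and $D_s$ have settled below $|\beta^*|+2$, and $p$ is never cancelled. Put $\delta^*=D\restr(|\beta^*|+1)$ and $\theta^*=A\restr(|\beta^*|+1)$. Whenever $p$ acts at Step~2 after $s_0$, the longest correct node is $\beta^*$, so the node it adds is $\delta^*$ with label $\theta^*$. That node would be truly correct, contradicting maximality. Hence $\delta^*$ is never certified after $s_0$; and if it is already in the domain, $p$ does not require attention there. The one point needing care is that the longest correct node at a late stage really is $\beta^*$. A longer correct node would be a prefix of $D_s=D$ below the settled length whose label is correct, so it would be truly correct, which is impossible. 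I expect this to be routine once the settling stage is chosen appropriately.

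\emph{Permanent child.} This is the step I expect to be the main obstacle. Take $s_1\ge s_0$ after which no procedure at or above $p$ requires attention. That such a stage exists follows by induction along the chain of permanent ancestors, each of which requires attention only finitely often. Also take $s_1$ so that the frozen period budget of $(p,\beta^*)$ has stabilised; this is possible because its correct period is uninterrupted after $s_0$. Let $k$ be this budget, or $k_p(\delta^*)$ if $\delta^*$ is already in the domain. Increase $s_1$ further so that $m_s\restr k=m\restr k$ for all $s\ge s_1$, using Lemma~\ref{lem:modulus}.

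If $p$ has no child at some stage $s\ge s_1$, then after Step~1 either some weaker procedure requires attention, which cancels procedures weaker than it but leaves $p$, or $p$ is the weakest procedure and calls $q$ with $\mu_q=m_s\restr k=m\restr k$. For the second case I need to check that, once $p$ is the weakest survivor, the call really is made; this follows since descendants cancelled never come back. After such a call, $q$ can be cancelled only in two ways. Step~1 could cancel it, but $\mu_q\pre m_v$ for all $v\ge s$, so this does not happen. Otherwise it is cancelled because it is weaker than an acting procedure, which must be $p$ or an ancestor of $p$, and none of these act after $s_1$. So $q$ is permanent.

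It remains to rule out that $p$ stays childless forever. In that case $p$ is the weakest surviving procedure at every stage after $s_1$, and then Step~3 calls a child at the first such stage. One subtlety is that an existing child $q$ created before $s_1$ might be cancelled by Step~1 at a late stage, since its $\mu_q$ could be wrong. After that, a fresh call is made with the correct $k$, and the argument above applies to it.
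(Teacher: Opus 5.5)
Your proof is correct and follows the paper's own argument. The root is truly correct, and unboundedly many truly correct nodes would give $A\leT D$. After a late stage the longest correct node is frozen at $\beta^*$, so $p$ stops acting. The fixed root length $k$ (the frozen budget or $k_p(\delta^*)$), together with $m_s$ settling below $k$, makes any child called later permanent, and a child that is never cancelled is permanent by definition.

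The one line you leave implicit is that a permanent $p$ which requires attention must itself act, since a stronger procedure acting would cancel $p$. That line is what turns your ``$p$ never acts after $s_0$'' into the stated ``requires attention only finitely often.'' Your use of the maximality of $\beta^*$ at this step, in place of the paper's case split on whether $\delta$ is ever defined, is a harmless variant.
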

\begin{proof}
The root of $p$ is truly correct, because $\mu_p$ applies at every
later stage. If truly correct nodes were unbounded in length, the tree and
label-extension properties would imply that every defined node
$\nu\pre D$ has its correct $A$-label: choose a longer truly correct
node and use preservation of initial segments. With oracle $D$, given
$n$, enumerate $\alpha_p$ until a node
$\nu\in\dom\alpha_p$ with $\nu\pre D$ and $|\nu|>n$ is found, and
output $\alpha_p(\nu)(n)$. The search terminates by unboundedness and
is correct by the preceding observation. This computes $A$ from $D$,
with the finite index of $p$ and its root built into the reduction.

The hypothesis excludes this. Let $\beta$ be the longest
truly correct node and put $\delta=D\restr(|\beta|+1)$.

Choose $s_0$ after $\beta$ has been defined and after the approximations
below $|\delta|$ have settled. If $\delta$ is ever defined, also choose
$s_0$ after its definition. At all stages beyond $s_0$, the longest
correct node of $p$ is $\beta$. If $\delta$ is defined, Step~2 does
not request another certificate for it. If $\delta$ is never defined,
a late request would lead to its definition: no stronger procedure
can act, since that would cancel the permanent $p$. This would
contradict our choice of $\delta$ as never defined. Thus $p$ no longer
requires attention.

After $s_0$, whenever $p$ has no child following Step~1, it is the
weakest active procedure and calls a child in Step~3. The possible
root lengths of these new children are bounded. They use either the
fixed certificate length for $\delta$, or the fixed period budget
of $\beta$, whose correctness no longer breaks. Choose a later stage
after $m_s$ has settled below this bound. A child created after that
stage cannot be cancelled by Step~1. It cannot be cancelled by $p$,
which no longer requires attention, or by an ancestor, since $p$
survives. Hence it is permanent. If no new child is created after
that stage, the already present child is itself permanent.
\end{proof}

\begin{proof}[Completion of Theorem~\ref{thm:transfer}]
Proposition~\ref{prop:Xi} and Lemmas~\ref{lem:Gamma}
and~\ref{lem:available} establish the first two assertions.
Starting from the permanent root and repeatedly applying
Lemma~\ref{lem:permanent}, choose permanent children
$p_0,p_1,\ldots$, with $r_{p_r}=r$.
On each side $j<2$, their candidate strings $\eta_{p_r,j}$ strictly
increase with $r$, and every corresponding modulus string is a prefix of $m$.

Any two $\Xi_j$-axioms whose left-hand strings are prefixes of $m$ are
both present and applicable at a sufficiently late stage.
Proposition~\ref{prop:Xi}
then makes their outputs comparable. Thus $\Xi_j(m)$ is a consistent
union, and the permanent candidates make it infinite. To compute its
$n$th bit with oracle $m$, enumerate axioms until one is
found whose left-hand string is a prefix of $m$ and whose output has
length greater than $n$. Totality guarantees
termination and the established consistency guarantees correctness.
This proves $X_j=\Xi_j(m)\leT m$, without asserting consistency of
$\Xi_j$ on unrelated oracles.

Finally, $\rho_{p_r}\pre D$, $a_{p_r}\pre A$, and
$\eta_{p_r,j}\pre X_j$. The corresponding coding axioms therefore
apply to $(A,X_j)$. Their output lengths strictly increase, so they
cover $D$. Consistency of $\Gamma_j$ implies
$\Gamma_j(A,X_j)=D$, as required.
\end{proof}

\section{Avoiding a designated column}\label{sec:column}

We now specialize the structural protocol. The key point is that the
coding functionals will continue to recover the whole target $D$,
while the certificate rule diagonalizes only against one designated
column of $D$. This separation is what allows the coded set and the
avoided set to play different roles.

For an infinite binary sequence $D$, define its odd column by
$\pi(D)(n)=D(2n+1)$. For a finite binary string $\gamma$, $\pi(\gamma)$
has the same definition on those $n$ with $2n+1<|\gamma|$.
This is a literal finite projection, including on incorrect
approximations. It is not an application of a possibly divergent
Turing reduction to an arbitrary finite oracle.

\begin{lemma}[Column avoidance]\label{lem:column}
Let $C$ be c.e., let $A,D\leT C$, and suppose that $A\nleT D$.
If $E=\pi(D)$ is noncomputable, there are sets $X_0,X_1\leT C$ such that
\[
 D\leT A\oplus X_j\quad(j<2),
 \qquad
 \neg(E\leT X_0\ \&\ E\leT X_1).
\]
\end{lemma}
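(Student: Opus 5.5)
We instantiate Theorem~\ref{thm:transfer} with a certificate rule that diagonalizes against the odd column. Fix an enumeration $(\Phi_e,\Psi_e)_{e}$ of pairs of Turing functionals. Requirement label $r$ corresponds to the pair $(\Phi_e,\Psi_e)$ with $e=r$. The requirement is
\[
 R_e:\ \neg\bigl(\Phi_e(X_0)=E\ \&\ \Psi_e(X_1)=E\bigr).
\]
The protocol then yields $X_j=\Xi_j(m)\leT m\leT C$ by Lemma~\ref{lem:modulus}, and $D=\Gamma_j(A,X_j)$, so $D\leT A\oplus X_j$. It remains to define a structurally admissible rule and to show that each $R_e$ holds.

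The rule, for $p$ with label $e$, child $q$, and $\beta,\delta,\theta$ as in Definition~\ref{def:admissible}, searches at stage $s$ (with bounded time $s$) for a side $j$ satisfying the safety condition~\eqref{eq:safety}, together with a large $k$ and strings $\zeta(\gamma)\succ\eta_{q,j}$ ending in $1$. These strings are to satisfy the following for every $\delta\pre\gamma\in2^k$: the functional on side $j$ ($\Phi_e$ if $j=0$, $\Psi_e$ if $j=1$), applied to $\zeta(\gamma)$, converges on some $n$ with $2n+1<k$ and $2n+1\geq|\delta|$, and its value differs from $\pi(\gamma)(n)$. Concretely, first find a long extension $\zeta_0\succ\eta_{q,j}$ and an $n$ with $2n+1\geq|\delta|$ at which the functional converges on $\zeta_0$. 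Then take $k>2n+1$ large, and let $\zeta(\gamma)$ be a free extension of $\zeta_0$ for those $\gamma$ whose odd column disagrees at $n$ with the computed value. For the other $\gamma$, search for a second extension giving a different value at a fresh $n'$. Finiteness of $2^k$ keeps the test effective, and the rule may return no proposal. Conditions~\eqref{eq:safety} and~\eqref{eq:certext} hold by construction, so the rule is admissible.

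Verification of $R_e$: suppose both equalities hold, and let $p$ be the permanent procedure with label $e$ from Lemma~\ref{lem:permanent}, with longest truly correct node $\beta$ and permanent child $q$. Eventually, Corollary~\ref{cor:preaction} guarantees a safe side, and Lemma~\ref{lem:permanent} says $p$ no longer requires attention. There are two cases.
\begin{enumerate}
\item If $\delta=D\restr(|\beta|+1)$ gets certified, its label $\theta$ must be wrong. A later certified call from $\delta$ then makes $X_{j_*}$ extend a certificate string $\zeta(\rho)$ with $\rho\pre D$. This gives $\Phi(X_{j_*})(n)\neq\pi(D)(n)=E(n)$, a contradiction.
\item If $\delta$ is never certified, the search fails at every late stage on the safe side $j$. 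Then $E$ becomes computable: search for extensions of the fixed candidate $\eta_{q,j}$ on which the side-$j$ functional converges, and read off values. Failure of the rule means that all such convergent values agree with every $\gamma$ extending $\delta$ that could be the true $D$, which pins down $\pi(D)$. This contradicts noncomputability of $E$.
\end{enumerate}

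The main obstacle is case (ii). The extensions of $\eta_{q,j}$ need not lie on $X_j$, so we must argue that a convergent computation with value different from the true $E(n)$ always yields a valid certificate. Such a certificate would be registered and would contradict permanence of the stage count. Hence all convergent values equal $E(n)$, and $E$ is computable. There is a further subtlety: the safe side may vary between stages. We handle this by noting that, after the father stabilizes, $\eta_{q,0}$ and $\eta_{q,1}$ are fixed, and that one side is safe at infinitely many stages. We then run the computation of $E$ on that side, using totality of $\Phi(X_j)=E$ to ensure that the searches converge.
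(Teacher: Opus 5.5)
This is correct and is essentially the paper's proof. It uses a splitting above $\eta_{q,j}$ at some $n$ with $k>2n+1$, diagonalizes through the long root $\rho_q=D\restr k$ of the permanent certified child, and argues that without a splitting $E$ would be computable; your choice of a side that is safe at infinitely many stages replaces the paper's waiting for splittings on both sides and works equally well. The only slip is the ``fresh $n'$'' in your concrete rule: the second extension must give the other value at the \emph{same} $n$, i.e.\ a splitting, which is exactly what your case (ii) argument produces from an off-path value combined with the computation along $X_j$.
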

\begin{proof}
Choose approximations and the modulus as in Section~\ref{sec:prelim},
so $m\leT C$.
Effectively enumerate all ordered pairs of binary-valued
Turing functionals as $(\Phi_{r,0},\Phi_{r,1})$, for $r\in\omega$.
We meet
\begin{equation}\label{eq:requirements}
 \mathcal R_r:\quad
 \Phi_{r,0}^{X_0}\neq E\quad\text{or}\quad\Phi_{r,1}^{X_1}\neq E.
\end{equation}
The enumeration includes pairs with different indices on the two sides.
If $\Phi_{r,j}^{X_j}$ is partial, then it is of course unequal to the
total characteristic function of $E$.

We specify the certificate rule for a procedure $p$ with label $r$ and
current child $q$. At stage $s$, on each side inspect all witnesses,
with codes and computation times at most $s$, to a splitting above
$\eta_{q,j}$. Such a witness consists of finite strings
$\tau^0,\tau^1\suc\eta_{q,j}$ and an argument $n$ with
\begin{equation}\label{eq:split}
 \Phi_{r,j}^{\tau^0}(n)\downarrow=0,
 \qquad
 \Phi_{r,j}^{\tau^1}(n)\downarrow=1.
\end{equation}
The computations in~\eqref{eq:split} use only positions within the
specified finite strings. The bounded tests are exhaustive as $s$
increases for any fixed candidate base.

If a splitting is found on a safe side $j$, select one by a fixed
effective order. Choose a fresh large $k$ such that
\[
 k>|\delta|\qquad\text{and}\qquad k>2n+1,
\]
where $\delta=D_s\restr(|\beta|+1)$ is the node to be certified.
For every $\delta\pre\gamma\in2^k$, choose a sufficiently long
extension $\zeta(\gamma)$, ending in $1$, of
\[
 \tau^{\,1-\gamma(2n+1)}.
\]
All these extensions can be chosen effectively after the bounded
splitting test. Their lengths need not be at most $s$; the action is
finite, not time-bounded by the stage number. They satisfy
\begin{equation}\label{eq:columncertificate}
 \Phi_{r,j}^{\zeta(\gamma)}(n)=1-\gamma(2n+1).
\end{equation}
They properly extend $\eta_{q,j}$ and obey the freshness convention.
If neither safe side has a splitting observed by that stage, return
no proposal. This is a finite effective structurally admissible rule.
For this application, whenever such a proposal is acted upon at Step~2,
we also retain its splitting argument $n$ with the registered certificate.
This is auxiliary data for the present verification and plays no
role in the structural protocol.

Theorem~\ref{thm:transfer} now gives total sets $X_j\leT m\leT C$
and $\Gamma_j(A,X_j)=D$ for both $j$. Fix $r$, let $p$ be the
permanent procedure with label $r$, and let $\beta$ be its longest
truly correct node. Put $\delta=D\restr(|\beta|+1)$, and let $q$
be the permanent child of $p$. Its father is $\beta$: the father
property holds at all late stages, where the longest correct node
has stabilized.

Suppose first that $\delta\in\dom\alpha_p$. Its certificate must
have been registered before $q$ was called, since a later registration
would cancel $q$. Write its data as $j,k,n$ and
$\zeta_{p,\delta}(\gamma)$. The call of $q$ is a certified call, with
\[
 \rho_q=D\restr k,
 \qquad
 \zeta_{p,\delta}(\rho_q)\pr\eta_{q,j}\pre X_j.
\]
Here we use the \emph{long root} $\rho_q$, not just the one-bit
successor $\delta$. Since $k>2n+1$, equation~\eqref{eq:columncertificate}
gives
\begin{equation}\label{eq:finaldiagonal}
 \Phi_{r,j}^{X_j}(n)
 =1-\rho_q(2n+1)=1-E(n).
\end{equation}
Thus $\mathcal R_r$ is met.

Suppose next that $\delta\notin\dom\alpha_p$, and toward a
contradiction assume
\begin{equation}\label{eq:bothcompute}
 \Phi_{r,0}^{X_0}=E=\Phi_{r,1}^{X_1}.
\end{equation}
There must be a splitting above $\eta_{q,j}$ on each side $j$.
Indeed, if there were no splitting on a side, a computable procedure
with the finite string $\eta_{q,j}$ built into its program could
compute $E(n)$ as follows: search for a finite extension of this
string on which $\Phi_{r,j}(n)$ converges, and return the resulting
bit. Equality~\eqref{eq:bothcompute} supplies such an extension
along $X_j$, so the search terminates. The absence of a splitting
makes its answer unique and equal to the value along $X_j$.
This would make $E$ computable, a contradiction.

Choose a sufficiently late stage at which $p$ and $q$ are present,
the longest correct node is $\beta$, and the bounded searches have
found a splitting on \emph{both} sides above the now fixed bases
$\eta_{q,j}$. Corollary~\ref{cor:preaction} supplies at least one
safe side at this stage. The rule therefore makes $p$ require
attention. No stronger procedure can act without cancelling the
permanent $p$, so $p$ acts and defines $\delta$. This contradicts
the present case. Hence~\eqref{eq:bothcompute} is impossible, and
$\mathcal R_r$ is met.

Every requirement is met. If both $X_0$ and $X_1$ computed $E$, the
ordered pair of functionals witnessing these two reductions would
occur in our enumeration and violate its requirement. Therefore
at least one side fails to compute $E$.
\end{proof}

\section{The cone-avoidance conclusion}\label{sec:conclusion}

We finish by reducing the general cone-avoidance statement to the
column-avoidance lemma. The only nontrivial case is when the set to
be avoided is already computable from $B$; we then place it in a
separate column of a target Turing-equivalent to $B$.

\begin{proof}[Proof of Theorem~\ref{thm:main}]
Fix a noncomputable $E$. If $E\nleT B$, take $X=B$.
Then $X\leT C$, $B\leT A\oplus X$, and $E\nleT X$.

Otherwise $E\leT B$. Define the literal join
\[
 D(2n)=B(n),\qquad D(2n+1)=E(n).
\]
Then $D=B\oplus E\eqT B$, so $A,D\leT C$ and $A\nleT D$.
The odd column of $D$ is the noncomputable set $E$.
Lemma~\ref{lem:column} gives two candidates $X_j\leT C$ with
$D\leT A\oplus X_j$, at least one of which does not compute $E$.
For that side let $X=X_j$. Then
\[
 B\leT D\leT A\oplus X\qquad\text{and}\qquad E\nleT X.
\]
\end{proof}

\section{Concluding remarks}\label{sec:concluding}

Theorem~\ref{thm:main} shows that the no-least-join phenomenon extends
to cone avoidance for every prescribed nonzero degree in the following
precise sense: whenever $c$ is c.e. and $a,b<c$ with $a\nleq b$, the
restricted difference filter
\[
    \F^c_{a,b}=\{x\leq c:b\leq a\vee x\}
\]
is contained in no nonzero Turing cone. Equivalently, $\zero$ is its
only common lower bound. Thus Question~1.19 of~\cite{DGLM} has a
negative answer.

This also sharpens the interpretation of the dependence phenomenon in
Theorem~1.18 of~\cite{DGLM}. One restricted difference filter may be
contained in another even though the first has no nonzero common lower
bound. Hence such a dependence cannot, under the hypotheses considered
here, be explained by a fixed nonzero degree computed by every witness
to the first cupping requirement.

The hypothesis $a\nleq b$ is essential in this form.
Theorem~1.11 of~\cite{DGLM} gives configurations with $a\leq b$ in
which $b$ itself is a nonzero common lower bound. The present result
therefore rules out one natural source of additional extension
obstructions, but does not provide a general characterization of such
obstructions. In particular, it does not settle the sufficiency of
the anti-cupping condition or the two-quantifier theory of the
$\Delta^0_2$ degrees with join.

\appendix
\section{Relation with the no-least-join construction}\label{app:comparison}

The following correspondence identifies the structural ingredients
borrowed from~\cite[Section~6, pp.~2999--3015]{DGLM}.
The references in the right-hand column use the numbering of the
published version of~\cite{DGLM}.
The present proof is given in full, so none of these correspondences
is used in place of a proof of a transfer claim.

\bigskip
\begin{center}
\small
Structural correspondence with the DGLM no-least-join construction
\par\smallskip
\renewcommand{\arraystretch}{1.08}
\begin{tabular}{@{}p{.47\textwidth}p{.46\textwidth}@{}}
\hline
Present argument & Corresponding ingredient in \DGLM\\
\hline
Lemma~\ref{lem:modulus} and Section~\ref{subsec:budgets}
 & Sections 6.2.1 and 6.2.5; Step 3 of Section 6.3\\[4pt]
Lemma~\ref{lem:ancestry}
 & Results 6.10--6.18\\[4pt]
Lemma~\ref{lem:persistence} and Proposition~\ref{prop:Xi}
 & Lemmas 6.8--6.9, 6.19--6.20; Proposition 6.7\\[4pt]
Lemmas~\ref{lem:certaxioms} and~\ref{lem:Gamma}
 & Lemmas 6.21--6.24\\[4pt]
Lemma~\ref{lem:history} and Corollary~\ref{cor:nodeeper}
 & Lemmas 6.27--6.28\\[4pt]
Lemma~\ref{lem:available} and Corollary~\ref{cor:preaction}
 & Lemma 6.25 and Corollary 6.26\\[4pt]
Lemma~\ref{lem:permanent} and the limit coding argument
 & Lemma 6.29, Claims 6.30--6.32, and Section 6.4.9\\[4pt]
Lemma~\ref{lem:column}
 & Column-avoidance modification of the certificate rule and final verification\\
\hline
\end{tabular}
\end{center}
\bigskip

Three formulation choices are worth recording. First, the minimum in
\eqref{eq:modulus} is the beginning of the current stable interval;
taking the greatest stage in that interval would give $s$ and would
not yield the required approximation. Second, the safety corollary
is explicitly derived from the invariant at the \emph{previous}
stage. Third, final diagonalization uses the long root whose length
was chosen with the certificate. The coding functionals retain the full target $D$ and keep the
first-component and output lengths equal throughout.

\section*{Acknowledgments}

The main result was discovered by ChatGPT 6 Pro (OpenAI), which was
also used in developing the proof and preparing the manuscript.
The author takes full responsibility for the correctness of the
mathematical statements and proofs, for the accuracy of the citations
and attributions, and for the final content of the manuscript.

\end{document}